\def\di{\displaystyle}
\def\N{\mathbb{N}}
\def\R{\mathbb{R}}
\def\CC{\mathscr{C}}
\newtheorem{definition}{Definition}
\newtheorem{lemma}{Lemma}
\newtheorem{theorem}{Theorem}
\newtheorem{remark}{\textbf{Remark}}
\newtheorem{corollary}{Corollary}
\begin{document}
\title[Fractional Noether theorem]{About the Noether's theorem for fractional Lagrangian systems and a generalization of the classical Jost method of proof}

\author{Jacky Cresson and Anna Szafra\'{n}ska}
\date{}
\maketitle
\begin{abstract}
Recently, the fractional Noether's theorem derived by G. Frederico and D.F.M. Torres in \cite{FT2} was proved to be wrong by R.A.C. Ferreira and A.B. Malinowska in (see \cite{FM}) using a counterexample and doubts are stated about the validity of other Noether's type Theorem, in particular (\cite{FT},Theorem 32). However, the counterexample does not explain why and where the proof given in \cite{FT2} does not work. In this paper, we make a detailed analysis of the proof proposed by G. Frederico and D.F.M. Torres in \cite{FT} which is based on a fractional generalization of a method proposed by J. Jost and X.Li-Jost in the classical case. This method is also used in \cite{FT2}. We first detail this method and then its fractional version. Several points leading to difficulties are put in evidence, in particular the definition of variational symmetries and some properties of local group of transformations in the fractional case. These difficulties arise in several generalization of the Jost's method, in particular in the discrete setting. We then derive a fractional Noether's Theorem following this strategy, correcting the initial statement of Frederico and Torres in \cite{FT} and obtaining an alternative proof of the main result of Atanackovic and al. \cite{ata}.  
\end{abstract}

\noindent

\textbf{Key words}: Euler-Lagrange equations, Noether's theorem, fractional calculus, symmetries.\\

\textbf{AMS subject classification: 26A33, 34A08, 70H03}

\tableofcontents

\section{Introduction}

In (\cite{FT},\cite{FT2}), G. Frederico and D.F.M. Torres have formulated a Noether's Theorem for fractional Lagrangian systems. In \cite{FM}, R.A.C. Ferreira and A.B. Malinowska give a counterexample to the main result of \cite{FT2} indicating that they have doubt about (\cite{FT},Theorem 32). The counterexample does not explain why the result is wrong and where the proof is not correct. In this paper, we answer these questions and moreover we give a corrected statement for the fractional Noether's theorem adapting the Frederico and Torres strategy of proof. Our discussion is made with respect to the fractional Noether's Theorem formulated in \cite{FT} but all our remarks and results applies also to \cite{FT2}.\\

As we will see, these questions lead to many difficulties which are not only interesting with respect to the fractional Noether theorem, but for all the generalizations proved by some authors using the same method, in particular in the discrete case (see \cite{BT} and \cite{ACP}). Precisely, Frederico and Torres generalize a method proposed by J.Jost and X. Li-Jost in \cite{jost} in the classical case. The idea is simple. The Noether's theorem is simple to prove in the case of transformations which do not depends on time. In order to cover the case of time dependent transformation, one introduces an extended Lagrangian taking the time as a new variable and then using the Noether's theorem in the autonomous case. The scheme of proof given in \cite{jost} is not very detailed and some points are omitted. These difficulties can be easily solved in the classical case and are related to standard results. However, trying to generalize this approach in the fractional case lead to serious difficulties. Forgetting for a moment the invariance condition and only concentrating on the proof given by Frederico and Torres, several points invalidate parts of the computations made in \cite{FT}. These difficulties arise in all the generalizations of the Jost's method. However, the fractional setting is probably the worth one in solving these problems.\\

The plan of the paper is as follows. In Section \ref{sec:intro}, first we give some preliminary information about fractional operators and then we remind the cases of fractional Noether's theorem for Lagrangian systems invariant under the action of one parameter group without time transformation.
In Section \ref{sec:inv} we remind the definition of fractional Lagrangian systems and the definition of invariance by a special class of symmetry group of transformations used. Already in this part, we discuss particular difficulties related with the definition of invariance used in \cite{FT}. Section \ref{sec:jost} is devoted to the method of J. and L. Jost to prove Noether's theorem. First we briefly describe the method in the classical case and explain the points which are not given in \cite{jost} and are sources of ambiguities. Finally, in Subsection \ref{sec:fractional}, we explain how the Jost's method can be generalized in order to cover the fractional case and in Subsection \ref{sec:noether} we state the fractional Noether's theorem that one obtain in this case. In Section \ref{sec:numerics}, we give some numerical simulations supporting our results.

\section{Reminder about fractional Lagrangian systems and invariance} \label{sec:intro}

 We denote by $\CC^k$, $k\in \N\cup\{\infty\}$, the class of regularity of functions and let $\CC^k([a,b],\R^n)$ denotes the set of all functions of class $\CC^k$ defined on $[a,b]$ with values in $\R^n$ and $a, b$ are two real numbers such that $a<b$.

\subsection{Preliminaries on fractional operators}
Before presenting the main idea of the paper, we introduce preliminary information about fractional operators. For a function $f:[a,b]\rightarrow \R$ we define :
\begin{definition}
\label{defintl}
The left (respectively right) Riemann-Liouville fractional integral operator of order $\alpha>0$ is defined by
\begin{equation} \label{rll}
I^{\alpha}_{a+}f(t)=\frac{1}{\Gamma(\alpha)}\int_{a}^t \frac{f(s)}{(t-s)^{1-\alpha}}ds,
\end{equation}
respectively
\begin{equation} \label{rll}
I^{\alpha}_{b-}f(t)=\frac{1}{\Gamma(\alpha)}\int_{t}^b \frac{f(s)}{(s-t)^{1-\alpha}}ds,
\end{equation}
for $t\in [a,b]$, where $\Gamma(\cdot)$ is the gamma function.
\end{definition}
The fractional derivative is defined by composing the above fractional integrals and the classical derivative of integer order :
\begin{definition} \label{fracderiv}
Let $t\in [a,b]$ and $\alpha\in(0,1]$ then we define
\begin{itemize}
\item the left and right Riemann-Liouville fractional derivative of order $\alpha$ :
\begin{equation} \label{rll}
D^{\alpha}_{a+} f(t)=\left( \frac{d}{dt} \circ I_{a+}^{1-\alpha}\right) f(t)=\frac{1}{\Gamma(1-\alpha)}\frac{d}{dt}\int_{a}^t \frac{f(s)}{(t-s)^{\alpha}}ds,
\end{equation}
\begin{equation} \label{rlr}
D^{\alpha}_{b-} f(t)=\left( -\frac{d}{dt} \circ I_{b-}^{1-\alpha}\right) f(t)=\frac{1}{\Gamma(1-\alpha)}\frac{d}{dt}\int_{t}^b \frac{f(s)}{(s-t)^{\alpha}}ds,
\end{equation}

\item the left and right Caputo fractional derivative of order $\alpha$ :
\begin{equation} \label{capl}
\di_c D^{\alpha}_{a+} f(t)=\left(I_{a+}^{1-\alpha} \circ \frac{d}{dt} \right) f(t)=\frac{1}{\Gamma(1-\alpha)}\int_{a}^t \frac{1}{(t-s)^{\alpha}}f'(s)ds,
\end{equation}
\begin{equation} \label{capr}
\di_c D^{\alpha}_{b-} f(t)=\left(I_{b-}^{1-\alpha} \circ \frac{d}{dt} \right) f(t)=\frac{1}{\Gamma(1-\alpha)}\int_{a}^t \frac{1}{(s-t)^{\alpha}}f'(s)ds.
\end{equation}
\end{itemize}
\end{definition}

Note that, for every $0<\alpha<1$ and $x\in AC([a,b],\R^n)$ the above derivatives are defined almost everywhere on the interval $[a,b]$. Moreover we have the following relations between Caputo and Riemann-Liouville definitions :
\begin{equation}
\label{relation}
\begin{split}
D_{a+}^{\alpha} x & =\di_c D^{\alpha}_{a+}+\frac{(t-a)^{-\alpha}}{\Gamma(1-\alpha)}x(a),\\
D_{b-}^{\alpha} x & =\di_c D^{\alpha}_{b-}+\frac{(b-t)^{-\alpha}}{\Gamma(1-\alpha)}x(b).
\end{split}
\end{equation}

\subsection{Fractional Lagrangian and Euler-Lagrange equations}

The function $L$
$$
\begin{array}{rlcl}
L: & [a,b]\times \R^n\times \R^n & \longrightarrow & \R\\
 & (t,x,v) & \longrightarrow & L(t,x,v)
\end{array}
$$
 is said to be a Lagrangian function if $L$ is of class $\CC^2$ with respect to all its arguments. The Lagrangian function $L$ defines a fractional Lagrangian $\mathcal{L}$ for $x\in \CC^1$

\begin{equation}
\label{lag}
\mathcal{L}_{\alpha,[a,b]}(x)=\int_a^b L\left(t,x(t),\di_{c}D_{a+}^{\alpha}x(t)\right)dt.
\end{equation}

Let us denote by $\CC^1_0 ([a,b])$ the set of all functions of class $\CC^1$ vanishing at the ends of the interval $[a,b]$. We define $E\in C^1([a,b],\R^n)$ as a nonempty subset open in the $\CC^1_0 ([a,b])$-direction.

\begin{theorem} \label{critical}
\cite{bo}
Let $0<\alpha<1$, then $x\in E$ is a critical point of $\mathcal{L}$ if and only if $x$ is a solution of the fractional Euler-Lagrange equation:
\begin{equation}
\label{fel}
D_{b-}^{\alpha}\Big(\frac{\partial L}{\partial v}(t,x(t),\di_c D_{a+}^{\alpha} x(t)\Big)+\frac{\partial L}{\partial x}(t,x(t),\di_{c}D_{a+}^{\alpha} x(t))=0,
\end{equation}
for every $t\in [a,b]$.
\end{theorem}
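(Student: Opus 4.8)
The plan is to prove Theorem \ref{critical} by the standard variational technique adapted to the fractional setting: compute the first variation of $\mathcal{L}$ along admissible perturbations and use the fundamental lemma of the calculus of variations after transferring the fractional derivative off the variation via a fractional integration by parts formula. Concretely, fix $x\in E$ and let $h\in\CC^1_0([a,b])$ be an arbitrary admissible direction. I would first write $\phi(\varepsilon)=\mathcal{L}_{\alpha,[a,b]}(x+\varepsilon h)$ and note that $x$ is a critical point precisely when $\phi'(0)=0$ for every such $h$. Differentiating under the integral sign (justified by the $\CC^2$-regularity of $L$ and the linearity and boundedness of $\di_c D^\alpha_{a+}$ on the relevant function space), I obtain
\begin{equation}
\phi'(0)=\int_a^b \left[\frac{\partial L}{\partial x}\big(t,x,\di_c D^\alpha_{a+}x\big)\,h(t)+\frac{\partial L}{\partial v}\big(t,x,\di_c D^\alpha_{a+}x\big)\,\di_c D^\alpha_{a+}h(t)\right]dt,
\end{equation}
where the arguments are evaluated at $x=x(t)$ and $\di_c D^\alpha_{a+}x=\di_c D^\alpha_{a+}x(t)$.

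The key step is to rewrite the second term so that $h$ appears undifferentiated. For this I would invoke the fractional integration by parts formula, which states that for suitable functions $f,g$ one has $\int_a^b f(t)\,\di_c D^\alpha_{a+}g(t)\,dt=\int_a^b g(t)\,D^\alpha_{b-}f(t)\,dt+\big[\text{boundary terms}\big]_a^b$, with the left Caputo derivative acting on $g$ transforming into the right Riemann–Liouville derivative $D^\alpha_{b-}$ acting on $f$. Applying this with $f=\frac{\partial L}{\partial v}$ and $g=h$, and using that $h(a)=h(b)=0$ because $h\in\CC^1_0([a,b])$, the boundary contributions vanish. This yields
\begin{equation}
\phi'(0)=\int_a^b \left[D^\alpha_{b-}\Big(\frac{\partial L}{\partial v}\big(t,x,\di_c D^\alpha_{a+}x\big)\Big)+\frac{\partial L}{\partial x}\big(t,x,\di_c D^\alpha_{a+}x\big)\right] h(t)\,dt.
\end{equation}

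To conclude I would apply the fundamental lemma of the calculus of variations: since $\phi'(0)=0$ for every $h\in\CC^1_0([a,b])$ and the bracketed expression is continuous in $t$ (again from the regularity hypotheses on $L$ and $x$), the bracket must vanish identically on $[a,b]$, which is exactly the fractional Euler–Lagrange equation \eqref{fel}. The converse direction is immediate, since if $x$ satisfies \eqref{fel} then the integrand above is zero and hence $\phi'(0)=0$ for all admissible $h$, so $x$ is critical.

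I expect the main obstacle to lie in justifying the fractional integration by parts step rigorously, rather than in the formal manipulation. The difficulty is twofold: first, the Caputo derivative of the perturbation $\di_c D^\alpha_{a+}h$ and the right Riemann–Liouville derivative $D^\alpha_{b-}(\partial L/\partial v)$ must both be well defined in an appropriate function space, which constrains the regularity assumed on $x$ and explains the remark in the excerpt that these derivatives exist almost everywhere for $x\in AC([a,b],\R^n)$; second, the boundary-term bookkeeping is delicate because the natural integration by parts identity for the Caputo derivative produces terms involving $I^{1-\alpha}_{b-}$ evaluated at the endpoints, and one must verify that the condition $h(a)=h(b)=0$ genuinely kills them. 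Since Theorem \ref{critical} is attributed to \cite{bo}, I would cite that reference for the precise functional-analytic framework making these steps legitimate, and treat the computation above as the conceptual skeleton of the argument.
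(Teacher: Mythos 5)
Your argument is correct and is precisely the standard proof of this result: first variation, fractional integration by parts converting the left Caputo derivative on the perturbation into the right Riemann--Liouville derivative on $\partial L/\partial v$ (with boundary terms of the form $h\cdot I^{1-\alpha}_{b-}(\partial_v L)$ killed by $h(a)=h(b)=0$), then the fundamental lemma. The paper itself gives no proof, citing \cite{bo} for this theorem, and your sketch matches the argument of that reference, correctly identifying the integration-by-parts justification as the only delicate point.
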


\subsection{The classical fractional Noether's theorem}
First, we remind the classical Noether’s theorem providing a conservation law for Lagrangian systems invariant under the action of one parameter group of diffeomorphisms with no transformation in time. \\

Precisely, let us consider the local group of transformations $\phi_s :\R^n \mapsto \R^n$, $s\in \R$ such that the functional $\mathcal{L}$ is invariant, i.e.
\begin{equation}
\label{Linv0}
\int_{t_a}^{t_b} L\left(t,x(t),\di_{c}D_{a+}^{\alpha}x(t)\right)dt=\int_{t_a}^{t_b} L\Big(t,\phi_s(x)(t),\di_c D_{a+}^{\alpha}(\phi_s(x))(t)\Big)dt,
\end{equation}
where $[t_a,t_b]\in[a,b]$. In this case we have the following well known result (see \cite{ata},\cite{FT},\cite{BCG}):

\begin{theorem}
\label{twCNT}
Let $\mathcal{L}$ be a fractional Lagrangian functional given by (\ref{lag}) invariant under the local transformation group $\{ \phi_s\}_{s\in \R}$, then the following equality holds for every solution of (\ref{fel}):
\begin{equation}
\di\frac{\partial L}{\partial v} (\star ) \cdot \di_c D_{a+}^{\alpha}
\left (
 \di\frac{d}{ds} \left (
\phi_s (x) \right )\mid_{s=0} \right )-D_{b-}^{\alpha}\left(\di\frac{\partial L}{\partial v} (\star )\right) \cdot \di\frac{d}{ds} \left (
\phi_s (x) \right )\mid_{s=0} =0,
\end{equation}
where $\left(\star\right) =\left ( t,x(t),\di_c D^{\alpha}_{a+}x(t)\right)$.
\end{theorem}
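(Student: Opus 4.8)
The plan is to avoid the extended-Lagrangian construction altogether and argue directly, since the group $\{\phi_s\}_{s\in\R}$ acts only on the state and not on time; in this autonomous situation differentiating the invariance identity in the parameter $s$ is enough. Write $\xi(t):=\di\frac{d}{ds}\phi_s(x)(t)\mid_{s=0}$ for the infinitesimal generator evaluated along $x$. The left-hand side of (\ref{Linv0}) is independent of $s$, so differentiating both sides at $s=0$ gives zero on the left and, after interchanging $\di\frac{d}{ds}$ with the integral and applying the chain rule to $L$, produces
\begin{equation}
0=\int_{t_a}^{t_b}\left[\di\frac{\partial L}{\partial x}(\star)\cdot\xi(t)+\di\frac{\partial L}{\partial v}(\star)\cdot\di\frac{d}{ds}\Big(\di_c D_{a+}^{\alpha}(\phi_s(x))(t)\Big)_{s=0}\right]dt,
\end{equation}
for every $[t_a,t_b]\subset[a,b]$, where $(\star)=(t,x(t),\di_c D_{a+}^{\alpha}x(t))$. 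The crucial step is then to commute $\di\frac{d}{ds}$ with the left Caputo operator: since $\di_c D_{a+}^{\alpha}$ is linear, one expects $\di\frac{d}{ds}\big(\di_c D_{a+}^{\alpha}(\phi_s(x))\big)_{s=0}=\di_c D_{a+}^{\alpha}\xi$.

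Granting this, I would next insert the fractional Euler--Lagrange equation (\ref{fel}), which for a solution $x$ reads $\di\frac{\partial L}{\partial x}(\star)=-D_{b-}^{\alpha}\big(\di\frac{\partial L}{\partial v}(\star)\big)$. Substituting into the integrand turns the identity into
\begin{equation}
0=\int_{t_a}^{t_b}\left[\di\frac{\partial L}{\partial v}(\star)\cdot\di_c D_{a+}^{\alpha}\xi(t)-D_{b-}^{\alpha}\Big(\di\frac{\partial L}{\partial v}(\star)\Big)\cdot\xi(t)\right]dt,
\end{equation}
holding for every subinterval $[t_a,t_b]\subset[a,b]$. As the bracketed expression is a fixed function of $t$ whose integral over all subintervals vanishes, the fundamental lemma of the calculus of variations forces it to be zero pointwise, which is precisely the asserted identity.

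I expect the genuine difficulty to lie not in the algebra but in justifying the two interchanges. Differentiation under the integral sign needs enough joint regularity of $(s,t)\mapsto L(t,\phi_s(x)(t),\di_c D_{a+}^{\alpha}(\phi_s(x))(t))$ and control of the singular Riemann--Liouville kernel, and commuting $\di\frac{d}{ds}$ through the nonlocal operator $\di_c D_{a+}^{\alpha}$ must be proved rather than assumed. Even more delicate, and exactly the kind of point flagged in the introduction, is that $\di_c D_{a+}^{\alpha}$ retains its fixed lower terminal $a$ when the functional is read on a proper subinterval $[t_a,t_b]$: the operator is nonlocal, so the restricted functional is not literally the same Lagrangian problem on $[t_a,t_b]$, and the step from invariance on every subinterval to a pointwise conservation law has to be reconciled with this nonlocality. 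These are the places where a naive fractional transcription of the classical Jost scheme is liable to fail.
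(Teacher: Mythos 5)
Your direct argument is correct and is in effect the paper's own route: the paper does not prove Theorem \ref{twCNT} but presents it as a well-known result from \cite{ata}, \cite{FT}, \cite{BCG}, and the method it alludes to (``differentiating the invariance relation with respect to $s$ and taking $s=0$'', then substituting the fractional Euler--Lagrange equation and localizing over the subintervals $[t_a,t_b]$ on which invariance is assumed) is exactly what you carry out. The caveats you flag at the end --- justifying differentiation under the integral, commuting $\frac{d}{ds}$ with the linear nonlocal operator $\di_c D^{\alpha}_{a+}$, and the fixed base point $a$ on subintervals --- are the right ones to single out and are harmless here precisely because the group does not transform time.
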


Even if there exists no Leibniz relation for fractional derivatives, one can deduce from the previous equality a first integral (see \cite{BCG}). With $\alpha=1$ Theorem \ref{twCNT} covers the classical Noether's theorem :
\begin{theorem}
\label{twCNT1}
Let $\mathcal{L}$ be a fractional Lagrangian functional given by (\ref{lag}) invariant under the local transformation group $\{ \phi_s\}_{s\in \R}$. Then for every solution of Euler-Lagrange equation :
\begin{equation} \label{CEL}
\frac{\partial L}{\partial x}(t,x(t),\dot{x}(t))=\frac{d}{dt}\left(\frac{\partial L}{\partial v}(t,x(t),\dot{x}(t)) \right),
\end{equation}
the following equality
\begin{equation}
\frac{d}{dt}\left(\frac{\partial L}{\partial v}(t,x(t),\dot{x}(t))\cdot \frac{d}{ds}\phi_s(x)|_{s=0}\right) =0
\end{equation}
holds, where $\dot x$ means the classical derivative of $x$.
\end{theorem}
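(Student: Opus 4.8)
The plan is to derive Theorem \ref{twCNT1} as the special case $\alpha=1$ of Theorem \ref{twCNT}, since the latter is already available and the only genuine work is to check that the fractional objects degenerate correctly to their classical counterparts. First I would record that when $\alpha=1$ the left Caputo derivative reduces to the ordinary derivative, i.e. $\di_c D_{a+}^{1}x(t)=\dot x(t)$, because $I_{a+}^{0}$ is the identity operator and $\di_c D_{a+}^{\alpha}=I_{a+}^{1-\alpha}\circ \frac{d}{dt}$ by Definition \ref{fracderiv}. Consequently the evaluation point $(\star)=\bigl(t,x(t),\di_c D_{a+}^{\alpha}x(t)\bigr)$ becomes $\bigl(t,x(t),\dot x(t)\bigr)$, so the Euler--Lagrange equation \eqref{fel} collapses to the classical form \eqref{CEL} once one checks that the right Riemann--Liouville derivative $D_{b-}^{1}$ reduces to $-\frac{d}{dt}$.

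Next I would treat the right-hand fractional derivative in the conserved quantity. By Definition \ref{fracderiv}, $D_{b-}^{\alpha}=-\frac{d}{dt}\circ I_{b-}^{1-\alpha}$, and at $\alpha=1$ we have $I_{b-}^{0}=\mathrm{Id}$, so $D_{b-}^{1}=-\frac{d}{dt}$. Substituting $\alpha=1$ into the conservation law of Theorem \ref{twCNT}, the first term becomes $\frac{\partial L}{\partial v}(\star)\cdot \frac{d}{dt}\bigl(\frac{d}{ds}\phi_s(x)|_{s=0}\bigr)$ and the second becomes $+\frac{d}{dt}\bigl(\frac{\partial L}{\partial v}(\star)\bigr)\cdot \frac{d}{ds}\phi_s(x)|_{s=0}$, using that $D_{b-}^{1}$ contributes the sign $-$ and that $\di_c D_{a+}^{1}$ acting on the variational vector field is just its ordinary time derivative. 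The two terms then recombine, by the classical Leibniz rule for the ordinary derivative, into a single total derivative $\frac{d}{dt}\bigl(\frac{\partial L}{\partial v}(t,x,\dot x)\cdot \frac{d}{ds}\phi_s(x)|_{s=0}\bigr)$, which is precisely the asserted identity.

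The step I expect to be the main obstacle is the recombination into a single total derivative, because the whole point of the paper is that no Leibniz rule holds for genuinely fractional derivatives; hence one must be careful that the passage to a total derivative is legitimate \emph{only} at $\alpha=1$, where the classical product rule is restored. I would therefore make explicit that the sum $f\cdot \frac{d}{dt}g + \frac{d}{dt}f\cdot g = \frac{d}{dt}(f\cdot g)$ is invoked with $f=\frac{\partial L}{\partial v}(\star)$ and $g=\frac{d}{ds}\phi_s(x)|_{s=0}$, both of which are at least $\CC^1$ in $t$ (the former by the $\CC^2$ regularity of $L$ together with smoothness of $x$, the latter by smoothness of the group action). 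A secondary, purely bookkeeping point is tracking the sign carried by $D_{b-}^{1}=-\frac{d}{dt}$ so that it cancels the minus sign already present in the second term of Theorem \ref{twCNT}, yielding the $+$ needed to form the product rule; getting this sign right is what makes the two terms coalesce rather than cancel.
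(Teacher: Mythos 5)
Your proposal is correct and is essentially the paper's own argument: the paper introduces Theorem \ref{twCNT1} with the single remark that ``with $\alpha=1$ Theorem \ref{twCNT} covers the classical Noether's theorem,'' i.e.\ exactly the specialization you carry out. Your filled-in details (that $\di_c D_{a+}^{1}=\frac{d}{dt}$, that $D_{b-}^{1}=-\frac{d}{dt}$, and that the two terms recombine by the Leibniz rule, which is available only at $\alpha=1$) are all accurate and consistent with the paper's definitions.
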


\subsection{The Noether's theorem for Lagrangian mixing classical and fractional derivatives}

In order to generalize the Jost's method we need an extension of the previous results in the case where the Lagrangian depends both on the classical derivative and the fractional one. This extension is already done in \cite{FT}.\\

Let us consider a Lagrangian $L:[a,b] \times \R^n \times \R^n \rightarrow \R^n$, $L(t,x,w,v)$, and the fractional functional
\begin{equation}
\label{lag2}
\mathcal{L}_{\alpha,[a,b]}(x)=\int_a^b L\left(t,x(t),\dot{x} (t), \di_{c}D_{a+}^{\alpha}x(t)\right)dt .
\end{equation}
Then the critical points of $\mathcal{L}$ are given by the solution of the following mixed fractional Euler-Lagrange equation :
\begin{equation}
\label{fel2}
\di\frac{d}{dt} \Big(\frac{\partial L}{\partial w}(\star ) \Big )
=
D_{b-}^{\alpha}\Big(\frac{\partial L}{\partial v}(\star) \Big)
+
\frac{\partial L}{\partial x}(\star),
\end{equation}
where $\left(\star\right) =\left(t,x(t),\dot{x},\di_c  D_{a+}^{\alpha} x(t) \right)$.

We consider the local group of transformations $\phi_s :\R^n \mapsto \R^n$, $s\in \R$ such that the functional $\mathcal{L}$ given by (\ref{lag2}) is invariant, i.e.
\begin{equation}
\label{Linv0}
\int_{t_a}^{t_b} L\left(t,x(t),\dot{x}(t),\di_{c}D_{a+}^{\alpha}x(t)\right)dt=\int_{t_a}^{t_b} L\Big(t,\phi_s(x)(t),\frac{d}{dt}\phi_s(x)(t),\di_c D_{a+}^{\alpha}(\phi_s(x))(t)\Big)dt,
\end{equation}
where $[t_a,t_b]\in[a,b]$.

\begin{theorem}
\label{mix}
Let $\mathcal{L}$ defined by (\ref{lag2}) be a fractional Lagrangian functional invariant under the local transforation group $\{\phi_s\}_{s\in\R}$, then the following relation
\begin{equation}
\frac{\partial L}{\partial x}(\star)\cdot \frac{d\phi_s(x)}{ds}|_{s=0}+\frac{\partial L}{\partial v}(\star)\cdot \di_c D_{a+}^{\alpha}\left(\frac{d\phi_s(x)}{ds}|_{s=0}\right)+\frac{\partial L}{\partial w}(\star)\frac{d}{dt}\left( \frac{d\phi_s(x)}{ds}|_{s=0}\right)=0,
\end{equation}
where $(\star)=\left(t,x(t),\dot{x}(t),\di_{c}D_{a+}^{\alpha}x(t)\right)$, holds for every solution of the Euler-Lagrange equation (\ref{fel2}).
\end{theorem}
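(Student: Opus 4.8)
The plan is to obtain the identity by differentiating the invariance condition (\ref{Linv0}) with respect to the group parameter $s$ at $s=0$ and then exploiting the arbitrariness of the subinterval $[t_a,t_b]$. Write $\xi(t)=\frac{d}{ds}\phi_s(x)(t)\mid_{s=0}$ for the infinitesimal generator of the group. Since the left-hand side of (\ref{Linv0}) is independent of $s$, differentiating both sides gives
\[
\frac{d}{ds}\Big|_{s=0}\int_{t_a}^{t_b} L\Big(t,\phi_s(x)(t),\tfrac{d}{dt}\phi_s(x)(t),\di_c D_{a+}^{\alpha}(\phi_s(x))(t)\Big)\,dt=0 .
\]

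First I would justify differentiation under the integral sign: this is legitimate because $L$ is of class $\CC^2$ and $s\mapsto\phi_s$ is smooth on the relevant compact range of $s$, so that the integrand and its $s$-derivative are jointly continuous and dominated on $[t_a,t_b]$. Applying the chain rule to $L$ then produces three contributions, weighted by $\frac{\partial L}{\partial x}(\star)$, $\frac{\partial L}{\partial w}(\star)$ and $\frac{\partial L}{\partial v}(\star)$ and multiplied respectively by $\frac{d}{ds}\phi_s(x)\mid_{s=0}$, by $\frac{d}{ds}\tfrac{d}{dt}\phi_s(x)\mid_{s=0}$ and by $\frac{d}{ds}\di_c D_{a+}^{\alpha}(\phi_s(x))\mid_{s=0}$.

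The substantive step is to identify these three $s$-derivatives with $\xi$, $\dot\xi$ and $\di_c D_{a+}^{\alpha}\xi$. The first is the definition of $\xi$. The second amounts to commuting $\frac{d}{ds}$ with the classical time derivative $\frac{d}{dt}$, which is the usual Schwarz interchange once joint $\CC^1$ regularity in $(s,t)$ is available. The third, namely
\[
\frac{d}{ds}\Big|_{s=0}\di_c D_{a+}^{\alpha}(\phi_s(x))(t)=\di_c D_{a+}^{\alpha}\xi(t),
\]
is the delicate point and, I expect, the main obstacle: since the Caputo derivative is the nonlocal operator $I_{a+}^{1-\alpha}\circ\frac{d}{dt}$, one must differentiate in $s$ through a weakly singular fractional integral. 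I would handle it by invoking the linearity of $\di_c D_{a+}^{\alpha}$ together with a dominated-convergence argument controlling the weakly singular kernel $(t-\cdot)^{-\alpha}$, the $\CC^2$ regularity supplying the integrability needed to pass $\frac{d}{ds}$ inside; this is precisely one of the commutation issues scrutinized later in the paper.

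Once the integrand has been reduced to $\frac{\partial L}{\partial x}(\star)\cdot\xi+\frac{\partial L}{\partial w}(\star)\cdot\dot\xi+\frac{\partial L}{\partial v}(\star)\cdot\di_c D_{a+}^{\alpha}\xi$, I would conclude using that its integral vanishes over every subinterval $[t_a,t_b]\subseteq[a,b]$: as the integrand is continuous in $t$, the fundamental lemma of the calculus of variations (arbitrariness of the endpoints) forces it to vanish pointwise, which is exactly the asserted relation. I would finally remark that this identity is a direct consequence of invariance and the chain rule and does not in itself invoke the Euler--Lagrange equation (\ref{fel2}); that equation is only needed at the next stage, when one seeks to rewrite the relation as a genuine conservation law along extremals.
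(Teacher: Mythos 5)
Your argument is correct and is essentially the standard one: the paper itself gives no proof of this theorem (it defers to Frederico--Torres), but the derivation it uses for the analogous infinitesimal invariance criterion later on is exactly yours --- differentiate the invariance identity with respect to $s$ at $s=0$, commute $\frac{d}{ds}$ with $\frac{d}{dt}$ and with the linear, nonlocal Caputo operator, and localize using the arbitrariness of the subinterval $[t_a,t_b]$. Your closing observation is also accurate: the stated relation is the raw infinitesimal invariance criterion and holds for arbitrary admissible $x$, so the restriction to solutions of (\ref{fel2}) in the statement plays no role at this stage and only matters when the relation is turned into a conservation law.
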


\section{Invariance of functionals and variational symmetries} \label{sec:inv}

\subsection{Variational symmetries}

We refer to the classical book of P.J.Olver \cite{olver} for more details in particular Chapter 4. In the following, we consider a special class of symmetry groups of differential equations called {\it projectable} or {\it fiber-preserving} (see \cite{olver},p.93) and given by
\begin{equation}
\label{group}
\begin{array}{rlcl}
\phi_s:& [a,b]\times \R^n & \longrightarrow & \R\times \R^n\\
 & (t,x) &  \longrightarrow & (\varphi_s^0(t),\varphi_s^1(x)),
\end{array}
\end{equation}
where $\{\phi_s\}_{s\in\R}$ is a one parameter group of diffeomorphisms satisfying $\phi_0=\mathds{1}$, where $\mathds{1}$ is the identity function. The associated {\it infinitesimal (or local) group action} (see \cite{olver},p.51) or transformations is obtained by making a Taylor expansion of $\phi_s$ around $s=0$:
\begin{equation} \label{phi}
\phi_s(t,x)=\phi_0(t,x)+s\di\frac{\partial \phi_s(t,x)}{\partial s} |_{s=0}+o(s).
\end{equation}
The {\it transform} (see \cite{olver},p.90) of a given function $x(t)$ identified with its graph $\Gamma_x =\{ (t,x(t)),\ t\in [a,b] \}$ by $\phi_s$ is easily obtained introducing a new variable $\tau$ defined by $\tau=\varphi_s^0(t)$. The transform of $x$ denoted by $\tilde{x}$ is then given by
$$
\tau\longrightarrow (\tau,\varphi_s^1\circ x\circ (\varphi_s^0)^{-1}(\tau)).
$$

\begin{remark}
In general, the transform of a given function is not so easy to determine explicitely (see \cite{olver}, Example 2.21, p.90-91) and one must use the implicit function theorem in order to recover the transform of $x$. This is precisely the reason why we restrict our attention to projectable or fiber-preserving symmetry groups. 
\end{remark}

We have the following fractional generalization of the definition of a {\it variational symmetry group} of a functional (see \cite{olver}, Definition 4.10 p.253):

\begin{definition}[Variational symmetries]
\label{def:1}
The local group of transformation $\phi_s$ is a variational symmetry group of the functional (\ref{lag}) if whenever $I=[t_a ,t_b ]$ is a subinterval of $[a,b]$ and $x$ is a smooth function defined over $I$ such that its transform under $\phi_s$ denoted by $\tilde{x}$ is defined over $\tilde{I}=[\mu_a ,\mu_b ]$ which is a subset of $\phi_s^0 ([a,b])=[\tau_{a},\tau_{b}]$, then
\begin{equation}
\label{Linv1}
\mathcal{L}_{\alpha,a,I}(x)=\mathcal{L}_{\alpha,\tau(a),\tilde{I}}(\tilde{x}).
\end{equation}
\end{definition}

It is interesting to give an explicit formulation of this definition. Indeed, according to definition (\ref{lag}) we can write (\ref{Linv1}) as
\begin{equation}
\label{Linv2}
\int_{t_a}^{t_b} L\left(t,x(t),\di_{c}D_{a+}^{\alpha}x(t)\right)dt=\int_{\mu_{a}}^{\mu_{b}} L\left(\tau,\varphi_s^1\circ x\circ (\varphi_s^0)^{-1}(\tau),\di_c D_{\tau_{a}+}^{\alpha}\left(\varphi_s^1\circ x\circ (\varphi_s^0)^{-1}(\tau)\right)\right)d\tau.
\end{equation}
The main point is that the explicit form of the integrand of the functional $\mathcal{L}_{\alpha,a,[t_a ,t_b ]}(x)$ depends on $a$ via the {\it base point} chosen for the fractional derivative. As a consequence, one {\it must} change the base point of the fractional derivative under the infinitesimal group action. This explain the change from the fractional derivative $\di_{c}D_{a+}^{\alpha}$ to $\di_c D_{\tau_{a}+}^{\alpha}$ in the previous expression.

\begin{remark}
The previous definition is in accordance with the one given by Atanackovic et al. in (\cite{ata}, Definition 10,p.1511).
\end{remark}

\begin{remark}
The fractional case with time transformation is very different from the classical case but also from the autonomous fractional case. Indeed, in the classical case, the integrand does not depend on the interval due to the local character of the classical derivative. Moreover, in the autonomous fractional case, the base point is not changed and as a consequence, if $x\in \mathcal{F}_{a,\alpha}$ is such that $\di_c D_{a+}^{\alpha} x$ is well defined then $\varphi_s^1\circ x\in \mathcal{F}_{a,\alpha}$ and the fractional derivative is also well defined with the same base point.
\end{remark}

\subsection{The Frederico-Torres definition of invariance}
\label{invFT}

In \cite{FT}, Frederico and Torres use a different definition. Indeed, in this case the authors do not change the base point for the fractional derivative in the definition they use for invariance of a functional (see Definition 16 p.840):

\begin{equation}
\label{LinvFT}
\int_{t_a}^{t_b} L\left(t,x(t),\di_{c}D_{a+}^{\alpha}x(t)\right)dt=\int_{\tau_{a}}^{\tau_{b}} L\left(\tau,\varphi_s^1\circ x\circ (\varphi_s^0)^{-1}(\tau),\di_c D_{a+}^{\alpha}\left(\varphi_s^1\circ x\circ (\varphi_s^0)^{-1}(\tau)\right)\right)d\tau.
\end{equation}

This means that their result is restricted to the case where $\phi_s^0 (a)=a$ for all $s\in \R$. This case was studied in (\cite{ata},Section 2.1 p.1507) and leads to a definition which is similar to \cite{FT} (see \cite{ata}, Definition 4,p.1509). 

\begin{lemma}[localization]
Let $\{ \phi_s^0 \}_{s\in \R}$ be a one parameter group of diffeomorphisms satisfying $\phi_s^0 (a)=a$ for all $s\in \R$, then we have 
\begin{equation}
\label{delfim1}
\phi_s^0 (t) =a + (t-a) \gamma_s (t) , 
\end{equation}
with $\gamma_s$ satisfying
\begin{equation}
\label{delfim2}
\gamma_{s+s'} (t) =\gamma_{s'} (t) \gamma_s ((t-a)\gamma_{s'} (t) +a) ,
\end{equation}
for all $s,s'\in \R$. 
\end{lemma}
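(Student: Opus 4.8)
The plan is to establish the factorization (\ref{delfim1}) by a Hadamard-type lemma and then to read off the functional equation (\ref{delfim2}) directly from the group law $\phi_{s+s'}^0 = \phi_s^0 \circ \phi_{s'}^0$.

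First I would treat (\ref{delfim1}). For $t\neq a$ one can simply set $\gamma_s (t) = (\phi_s^0 (t) - a)/(t-a)$; the only issue is to show that this quantity extends smoothly across the base point $t=a$. Since $\phi_s^0$ is a diffeomorphism and $\phi_s^0 (a) = a$ by hypothesis, applying the fundamental theorem of calculus to the map $\lambda \longmapsto \phi_s^0 (a + \lambda (t-a))$ gives
\[
\phi_s^0 (t) - a = \phi_s^0 (t) - \phi_s^0 (a) = (t-a)\int_0^1 (\phi_s^0)'\big(a + \lambda (t-a)\big)\, d\lambda .
\]
Hence setting
\[
\gamma_s (t) = \int_0^1 (\phi_s^0)'\big(a + \lambda (t-a)\big)\, d\lambda
\]
yields a function that is as regular as $(\phi_s^0)'$, is defined on all of $[a,b]$ (including $t=a$, where $\gamma_s (a) = (\phi_s^0)'(a)$), and satisfies (\ref{delfim1}) identically.

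For the functional equation I would insert this factorization into both sides of the group identity $\phi_{s+s'}^0 = \phi_s^0 \circ \phi_{s'}^0$. The left-hand side reads $\phi_{s+s'}^0 (t) = a + (t-a)\gamma_{s+s'} (t)$. For the right-hand side, writing $\phi_{s'}^0 (t) = a + (t-a)\gamma_{s'} (t)$ and applying (\ref{delfim1}) a second time, now to $\phi_s^0$ evaluated at the transformed point $\phi_{s'}^0 (t)$, gives
\[
\phi_s^0 \big(\phi_{s'}^0 (t)\big) = a + \big(\phi_{s'}^0 (t) - a\big)\,\gamma_s \big(\phi_{s'}^0 (t)\big) = a + (t-a)\,\gamma_{s'} (t)\,\gamma_s \big((t-a)\gamma_{s'} (t) + a\big).
\]
Equating the two expressions and cancelling the common factor $(t-a)$ for $t\neq a$ produces exactly (\ref{delfim2}); the identity then extends to $t=a$ by continuity of both sides, which completes the proof.

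The computation is routine once the group law is used, and I expect the only genuine points requiring care to be the following two. The first is the smoothness of $\gamma_s$ at the base point $t=a$, which is precisely what the integral representation above resolves and is the reason for not defining $\gamma_s$ by the naive quotient alone. The second is the bookkeeping of the composition convention: the factorization must be applied to $\phi_s^0$ at the nested argument $\phi_{s'}^0 (t)$ rather than at $t$, and it is this nested evaluation that is responsible for the nonlinear, non-commutative shape of the right-hand side of (\ref{delfim2}).
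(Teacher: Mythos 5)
Your proof is correct and follows exactly the route the paper indicates: the factorization \eqref{delfim1} via the Hadamard lemma (which you make explicit through the integral representation $\gamma_s(t)=\int_0^1 (\phi_s^0)'(a+\lambda(t-a))\,d\lambda$), and the functional equation \eqref{delfim2} by substituting this factorization into the group law $\phi_{s+s'}^0=\phi_s^0\circ\phi_{s'}^0$. The paper's own proof is a one-line remark to this effect, so your write-up is simply a fully detailed version of the same argument.
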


\begin{proof}
The first part follows from the Hadamard Lemma and the second one from the group property.
\end{proof}

As we will see, these conditions implies strong constraints on the type of symmetries that one can consider.

\section{The Noether theorem and the Jost method in the fractional case} \label{sec:jost}

\subsection{A fractional Noether theorem}

Our aim in this Section is to give a new proof of the following fractional version of the Noether theorem:

\begin{theorem}[Fractional Noether theorem]
\label{main}
Suppose $G=\{ \phi_s (t,x)=(\phi_s^0 (t) ,\phi_s^1 (x) )\}_{s\in \R}$ is a one parameter group of symmetries of the variational problem $$\di\mathcal{L}_{\alpha,[a,b]}(x)=\di\int_a^b L\left(t,x(t),\di_{c}D_{a+}^{\alpha}x(t)\right)dt$$ 
such that 
\begin{equation}
\di\frac{d \phi_s^0}{dt} =K(s) ,
\end{equation}
where $K(s)$ is a function satisfying $K(0)=1$.
Let 
\begin{equation}
X= \zeta (t) \di\frac{\partial}{\partial t} +\xi (x) \di\frac{\partial}{\partial x} ,
\end{equation}
be the infinitesimal generator of $G$. Then, the function
\begin{equation}
\label{conslaw}
I(x)=L(\star ) \cdot\zeta +\di \int_a^t
\left [  
D_{b-}^{\alpha} \left [ \partial_v L (\star ) \right ] .\left ( \dot{x} \zeta -\xi\right  ) -\partial_v L (\star ). \left ( \zeta \cdot D_{a+}^{\alpha}[ \dot{x}] 
+\dot{\zeta} \cdot D_{a+}^{\alpha} [x] - D_{a+}^{\alpha} (\xi ) 
\right )
\right ] dt ,
\end{equation}
is a constant of motion on the solution of the fractional Euler-Lagrange equation (\ref{fel}).
\end{theorem}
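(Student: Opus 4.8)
The plan is to follow the fractional Jost method: promote the time $t$ to a dependent variable parametrized by a new independent variable $\tau$, so that a curve $t\mapsto x(t)$ is replaced by $\tau\mapsto (t(\tau),y(\tau))$ with $y(\tau)=x(t(\tau))$. First I would construct an extended Lagrangian $\bar L$ such that $\di\int_a^b L(t,x(t),\di_{c}D_{a+}^{\alpha}x(t))\,dt=\int \bar L\,d\tau$. Writing $t'=dt/d\tau$ and $dt=t'\,d\tau$, the only non-trivial ingredient is the expression of $\di_{c}D_{a+}^{\alpha}x$ evaluated at $t(\tau)$ in terms of $y$. The resulting $\bar L$ depends on the dependent variables $(t,y)$, on the \emph{classical} derivative $t'$, and on the \emph{fractional} derivative of $y$; it carries no explicit $\tau$-dependence. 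Consequently the time-dependent group $\{\phi_s\}$ acts on $(t,y)$ without transforming $\tau$, i.e. it becomes an \emph{autonomous} symmetry of a Lagrangian mixing a classical and a fractional derivative. This is exactly the setting of Theorem \ref{mix}, which I would apply to $\bar L$ to produce the infinitesimal invariance identity, and then specialize to the physical parametrization $\tau=t$, where $t'=K(0)=1$.

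The hard part is the construction of $\bar L$, and this is where the hypothesis $\di\frac{d\phi_s^0}{dt}=K(s)$ is decisive. Because there is no chain rule for fractional derivatives, $\di_{c}D_{a+}^{\alpha}x(t(\tau))$ cannot be re-expressed through $y$ for an arbitrary reparametrization. For an affine change of variable, however, the Caputo derivative obeys a clean scaling law: if $t(\tau)=K(\tau-\tau_a)+a$ then $\di_{c}D_{a+}^{\alpha}x(t(\tau))=K^{-\alpha}\,\di_{c}D_{\tau_a+}^{\alpha}y(\tau)$, obtained by the substitution $u=K(\sigma-\tau_a)+a$ in the defining integral. The condition $\di\frac{d\phi_s^0}{dt}=K(s)$ forces precisely such affine time transformations, $\phi_s^0(t)=K(s)t+b(s)$, so that the scaling applies and the base point is merely shifted from $a$ to $\tau_a=\phi_s^0(a)$, in accordance with the base-point change already built into Definition \ref{def:1}. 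It also forces the infinitesimal generator $\zeta$ to be affine in $t$, hence $\dot\zeta$ constant; this rigidity is the structural price of the fractional setting and is the main obstacle, since any attempt to admit a genuinely non-affine time transformation breaks the scaling identity and with it the whole construction.

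With the invariance identity in hand, the conservation law follows by direct differentiation. I would compute $\di\frac{dI}{dt}$ from (\ref{conslaw}): the boundary term gives $\di\frac{d}{dt}(L\cdot\zeta)=\dot L\,\zeta+L\dot\zeta$ with $\dot L=\partial_t L+\partial_x L\,\dot x+\partial_v L\cdot D_{a+}^{\alpha}\dot x$ (using $\di\frac{d}{dt}\,\di_{c}D_{a+}^{\alpha}x=D_{a+}^{\alpha}\dot x$, immediate from the Riemann--Liouville definition), while the fundamental theorem of calculus returns the bracketed integrand. The terms $\partial_v L\cdot D_{a+}^{\alpha}\dot x\cdot\zeta$ then cancel, and substituting the fractional Euler--Lagrange equation (\ref{fel}) in the form $D_{b-}^{\alpha}[\partial_v L]=-\partial_x L$ removes the remaining $\partial_x L$ contributions; what is left is exactly the infinitesimal invariance identity of the first two paragraphs, whence $\di\frac{dI}{dt}=0$. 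The point demanding care is the bookkeeping between Riemann--Liouville and Caputo derivatives, handled through relation (\ref{relation}), together with the observation that the non-local integral term in (\ref{conslaw}) is unavoidable: in the absence of a Leibniz rule the combination $D_{b-}^{\alpha}[\partial_v L]\cdot\eta-\partial_v L\cdot D_{a+}^{\alpha}\eta$ is no longer a total derivative as in the classical Noether theorem (Theorem \ref{twCNT1}), so the conserved quantity must be written as the primitive of this fractional defect rather than in closed form.
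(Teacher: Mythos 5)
Your overall route is the paper's route: affine time transformations forced by $\tfrac{d\phi_s^0}{dt}=K(s)$, the scaling law $\di_c D_{\phi_s^0(a)+}^{\alpha}(x\circ(\phi_s^0)^{-1})=K(s)^{-\alpha}\,\di_c D_{a+}^{\alpha}x$ standing in for the missing chain rule, an extended Lagrangian $\tilde L(\tau,(t,x),(w,v))=L(t,x,v/w^{\alpha})\,w$ mixing a classical and a fractional derivative, an infinitesimal invariance identity, and a direct verification that $\tfrac{dI}{dt}=0$ using only the original Euler--Lagrange equation (\ref{fel}). Your closing computation is correct and reproduces, in reverse, the paper's derivation of (\ref{conslaw}) from identity (\ref{funda}).

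There is, however, one genuine gap, and it sits exactly at the point the whole paper is written to diagnose. You obtain the infinitesimal invariance identity by ``applying Theorem \ref{mix} to $\bar L$''. But Theorem \ref{mix} asserts its identity only \emph{for solutions of the mixed Euler--Lagrange equation} (\ref{fel2}) associated with the extended Lagrangian, and the reparametrized paths $(t(\tau)=\tau,\,x(\tau))$ with $x$ a solution of (\ref{fel}) are \emph{not} such solutions: by Lemma \ref{lemmace} this would require the second Euler--Lagrange equation $(CE_{\alpha})$, i.e.
\begin{equation*}
\frac{\partial L}{\partial t}(\star_{\tau}) -\frac{d}{d\tau}\left(L(\star_{\tau})-\di_cD^{\alpha}_{a+} x(\tau)\cdot\frac{\partial L}{\partial v}(\star_{\tau})\right)=0 ,
\end{equation*}
which fails for generic solutions of (\ref{fel}) when $\alpha\neq 1$ (this is demonstrated numerically in the paper, and it is precisely the hidden assumption that invalidates the Frederico--Torres proof). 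The repair is to notice that the identity you need does not require criticality for $\tilde L$ at all: it follows from the invariance of the extended functional alone, by differentiating the invariance relation with respect to $s$ at $s=0$ (the ``infinitesimal invariance criterion'', formula (\ref{inficriterion})), restricted to the set $U$ on which only the \emph{original} fractional Euler--Lagrange equation is imposed. As written, your appeal to Theorem \ref{mix} either assumes $(CE_{\alpha})$ implicitly or is unjustified; once you replace it by the infinitesimal invariance criterion, the rest of your argument goes through and coincides with the paper's proof.
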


One can of course directly prove this Theorem by differentiating the invariance relation with respect to $s$ and taking $s=0$ in the expression. Interested people will find such computations in the work of Atanakovic and al. \cite{ata}. Here, we follow a different strategy first proposed by Frederico and Torres in \cite{FT}. \\

Using the Euler-Lagrange equation (\ref{fel}), one can write the conservation law (\ref{conslaw}) as follows:

\begin{equation}
\label{conslaw2}
I(x)=L(\star ) \cdot\zeta +\di \int_a^t
\left [  
-\left [ \partial_x L (\star ) \right ] .\left ( \dot{x} \zeta -\xi\right  ) -\partial_v L (\star ). \left ( \zeta \cdot D_{a+}^{\alpha}[ \dot{x}] 
+\dot{\zeta} \cdot D_{a+}^{\alpha} [x] - D_{a+}^{\alpha} (\xi ) 
\right )
\right ] dt ,
\end{equation}
which is more useful from the computational point of view.\\

The condition concerning the symmetry group is of course restrictive but it covers already many interesting examples like the {\it translation in time} group given by $\phi_s^0 (t)=t+s$ or a more complicated one given by 
$\phi_s^0 (t)= t e^{-cs}=t-cts +o(s)$, where $c$ is a constant and used in (\cite{FT},Example 34,p.845). In the important case of the translation group in time, we obtain:

\begin{corollary}
\label{timespec}
Assume that the Lagrangian is independent of the time variable, then the quantity
\begin{equation}
I(x)=L(\star ) +\di \int_a^t
\left [  
D_{b-}^{\alpha} \left [ \partial_v L (\star ) \right ] .\dot{x} -\partial_v L (\star ). \cdot D_{a+}^{\alpha}[ \dot{x}]  
\right ] dt ,
\end{equation}
is a constant of motion on the solution of the fractional Euler-Lagrange equation (\ref{fel}).
\end{corollary}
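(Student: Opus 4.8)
The plan is to derive this corollary as the specialization of Theorem~\ref{main} to the one-parameter group of time translations. I would set
$$\phi_s(t,x)=(\phi_s^0(t),\phi_s^1(x))=(t+s,\,x),$$
leaving the space variable untouched. This group satisfies the structural hypothesis of Theorem~\ref{main}, since $\di\frac{d\phi_s^0}{dt}=1=:K(s)$ with $K(0)=1$, and from the Taylor expansion (\ref{phi}) its infinitesimal generator is read off as $\zeta(t)=\di\frac{\partial}{\partial s}(t+s)\mid_{s=0}=1$ and $\xi(x)=\di\frac{\partial}{\partial s}(x)\mid_{s=0}=0$.

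The step requiring genuine care --- and the real crux of the matter --- is verifying that this group is a variational symmetry of the autonomous functional in the sense of Definition~\ref{def:1}. Here the base-point-shifting convention is indispensable. Writing the transform as $\tilde{x}(\tau)=x(\tau-s)$ on $\tilde{I}=[t_a+s,t_b+s]$ with shifted base point $\tau_a=\phi_s^0(a)=a+s$, I would compute the shifted Caputo derivative $\di_c D_{(a+s)+}^{\alpha}\tilde{x}(\tau)$ directly from (\ref{capl}): substituting $\tilde{x}'(\sigma)=x'(\sigma-s)$ and changing variables $\sigma\mapsto\sigma-s$ sends the base point $a+s$ to $a$ and the upper limit $\tau$ to $\tau-s$, yielding exactly $\di_c D_{a+}^{\alpha}x(\tau-s)$. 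Combined with the change of variable $\tau=t+s$ in the integral and the hypothesis that $L$ does not depend on $t$ explicitly, both sides of (\ref{Linv2}) collapse to $\di\int_{t_a}^{t_b}L(x(t),\di_c D_{a+}^{\alpha}x(t))\,dt$, so invariance holds. I would stress that without shifting the base point this equality fails; this is precisely the obstruction analysed in Subsection~\ref{invFT}, and it is what already makes the autonomous fractional case nontrivial.

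With the symmetry established, the conclusion follows by mere substitution. Inserting $\zeta=1$, $\dot{\zeta}=0$ and $\xi=0$ (so that $D_{a+}^{\alpha}(\xi)=0$ and $\dot{x}\zeta-\xi=\dot{x}$) into the conservation law (\ref{conslaw}) of Theorem~\ref{main}, the leading term $L(\star)\cdot\zeta$ becomes $L(\star)$, the first term inside the bracket reduces to $D_{b-}^{\alpha}[\partial_v L(\star)]\cdot\dot{x}$, and the second to $-\partial_v L(\star)\cdot D_{a+}^{\alpha}[\dot{x}]$. This reproduces verbatim the stated expression for $I(x)$, whose constancy along solutions of (\ref{fel}) is then inherited directly from Theorem~\ref{main}.
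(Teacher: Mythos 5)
Your proposal is correct and follows essentially the same route as the paper: the corollary is obtained by specializing Theorem~\ref{main} to the translation group $\phi_s(t,x)=(t+s,x)$, for which $\zeta=1$, $\dot{\zeta}=0$, $\xi=0$, and substituting into (\ref{conslaw}). Your careful verification that the autonomous functional is invariant under time translation (via the base-point shift) is exactly the content of the paper's own lemma on autonomous Lagrangians following Lemma~\ref{lemma}, so nothing is missing or different in substance.
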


We will use this result to test our theorem using numerical simulations.

\subsection{Reminder about the classical case}
\label{jostclassic}

In this Section, we consider the case $\alpha=1$. As recalled in the introduction, the basic idea behind the Jost method is to recover the Noether theorem for general transformations from the easier one corresponding to transformations without time. In the following, we indicate some steps in this method which lead to difficulties in the fractional case.\\

A first step is to reduce the invariance condition of the functional to an equality which can be understood as an invariance formula for transformations without transforming time, i.e. without changing the boundaries of integration. This is easily done using a change of variables. Indeed, posing
\begin{equation}
\tau=\varphi_s^0 (t ),
\end{equation}
in the right hand side of the invariance formula (\ref{Linv2}), one easily gets
\begin{equation}
\label{jost1}
\int_{a}^{b}L\left(t,x(t),\frac{dx(t)}{dt}\right)dt = \int_{a}^{b}L\left (\varphi^0_s(t),(\varphi^1_s\circ x)(t),\frac{d}{dt}\left(\varphi_s^1\circ x \right)(t) \frac{1}{\frac{d\varphi_s^0(t)}{dt}}\right )\frac{d\varphi_s^0(t)}{dt}dt .
\end{equation}
During the derivation of this equality, one uses a particular feature of the classical derivative which is the {\it chain rule} property, precisely we use the relation
\begin{equation}
\frac{d}{d\tau}\left(\varphi^1_s\circ x \circ(\varphi_s^0)^{-1}(\tau)\right)=\frac{d}{dt}\left(\varphi_s^1\circ x \right)(t) \frac{1}{\frac{d\varphi_s^0(t)}{dt}}.
\end{equation}
However, in the fractional calculus case this property of chain rule is known to be false and more difficult formula must be considered (we refer to \cite{aj2} for a general discussion about the algebraic relations that one can wait generalizing the notion of derivative to continuous functions).\\

Introducing the {\it extended Lagrangian} defined by
\begin{equation}
\label{extendjost}
\tilde{L} (\tau, (t,x),(w,v)) := L\left (
t,x,\di\frac{v}{w} \right ) \cdot w ,
\end{equation}
equation (\ref{jost1}) can be interpreted as the invariance of $\tilde{L}$ under the group of transformations without transforming time given by
\begin{equation}
\phi_s (t,x) =\left (
\varphi_s^0 (t), \varphi_s^1 (x)
\right ) ,
\end{equation}
over the set of solutions of the Euler-Lagrange equations associated to $\tilde{L}$ which satisfy the condition
\begin{equation}
t(\tau ):= \tau ,
\end{equation}
denoted by $U$ in the following. Indeed, over $U$ we have
\begin{equation}
\tilde{L} (\tau ,t(\tau ), x(\tau ) ,\dot{t} (\tau ) ,\dot{x}(\tau ) ) =
L(\tau ,x(\tau ) ,\dot{x} (\tau )) .
\end{equation}
As a consequence, we can rewrite equation (\ref{jost1}) as
\begin{equation}
\di\int_a^b
\tilde{L}(\tau ,t(\tau) ,x(\tau ) , \dot{t} (\tau) ,\dot{x} (\tau ) ) d\tau =
\di\int_a^b
L
\left (
\tau ,\phi_s (t(\tau) ,x(\tau )) ,\di\frac{d}{dt}
\left (
\phi_s (t(\tau) ,x(\tau ))
\right )
\right )
d\tau
.
\end{equation}
The proof of the Noether theorem then follows easily from the case of transformations without changing time, which ensures that the following quantity
\begin{equation}
I{(\tau,(t,x),(w,v))}=
\frac{\partial\tilde{L}}{\partial v}\left(t,x,w,v\right)\cdot \left.\frac{d\varphi^1_s(x)}{ds}\right|_{s=0}+
\frac{\partial\tilde{L}}{\partial w}\left(t,x,w,v\right)\cdot \left.\frac{d\varphi^0_s(x)}{ds}\right|_{s=0}
\end{equation}
is a first integral over $U$.

A simple computation leads to the classical form of the first integral for general transformations
\begin{equation}
I{(\tau,(t,x),(w,v))}=
\frac{\partial L}{\partial v}\left(t,x,v\right)\cdot \left.\frac{d\varphi^1_s(x)}{ds}\right|_{s=0}+\left(L\left(t,x,v\right)-v\frac{\partial L}{\partial v}\left(t,x,v\right)\right) \left.\frac{\varphi^0_s(x)}{ds}\right|_{s=0}.
\end{equation}

In order to be complete, one needs to check if the solutions $x(t)$ of the Euler-Lagrange equations associated to $L$ produce solutions of the form $(t(\tau)=\tau , x(\tau) )$ of the Euler-Lagrange equations associated to $\tilde{L}$. Indeed, this was implicitly assumed in the previous derivation. The Euler-Lagrange equations associated to $\tilde{L}$ are given by
\begin{equation}
\begin{split}
&\frac{d}{d\tau}\left[\frac{\partial \tilde{L}}{\partial v}(\tilde{\star}_\tau)\right]=\frac{\partial \tilde{L}}{\partial x}(\tilde{\star}_\tau), \\
&\frac{d}{d\tau}\left[\frac{\partial \tilde{L}}{\partial w}(\tilde{\star}_\tau)\right]=\frac{\partial \tilde{L}}{\partial t}(\tilde{\star}_\tau),
\end{split}
\end{equation}
where $(\tilde{\star}_\tau)=\left(t(\tau),x(t(\tau)),\frac{dt(\tau)}{d\tau}, \frac{dx(t(\tau))}{d\tau}\right)$.

A simple computation leads to
\begin{align}
&\frac{\partial \tilde{L}}{\partial t}(\tilde{\star}_\tau)= \frac{\partial L}{\partial t}(\star_\tau) \frac{dt(\tau)}{d\tau}, &\frac{\partial \tilde{L}}{\partial w}(\tilde{\star}_\tau) &= L\left(\star_\tau\right) - \frac{dx(t(\tau))}{d\tau}\frac{1}{\frac{d t(\tau)}{d\tau}} \frac{\partial L}{\partial v}(\star_\tau), \label{eq_partialtilde1}\\
&\frac{\partial \tilde{L}}{\partial x}(\tilde{\star}_\tau) = \frac{\partial L}{\partial x} (\star_\tau)\frac{dt(\tau)}{d\tau}, &\frac{\partial \tilde{L}}{\partial v} (\tilde{\star}_\tau)&= \frac{\partial L}{\partial v}(\star_\tau), \label{eq_partialtilde2}
\end{align}
where $(\star_\tau)=\left(t(\tau),x(t(\tau)),\frac{dx(t(\tau))}{d\tau}\frac{1}{\frac{d t(\tau)}{d\tau}}\right)$.

As a consequence, a path $(t(\tau )=\tau ,x(\tau ))$ is a solution of the Euler-Lagrange equations associated to $\tilde{L}$ if and only if
\begin{equation}
\label{EL1_final}
\frac{d}{d\tau}\left[\frac{\partial L}{\partial v}\left(\star_\tau\right)\right]=\frac{\partial L}{\partial x}\left(\star_\tau\right)
\end{equation}
and
\begin{equation}
\frac{d}{d\tau}L(\star_\tau)=\frac{\partial L}{\partial t}(\star_\tau)+\frac{d}{d\tau}\left(\frac{dx(\tau)}{d\tau} \frac{\partial L}{\partial v} (\star_\tau)\right),
\end{equation}
where $(\star_\tau)=\left( \tau,x(\tau ),\dot{x}(\tau) \right )$.\\

The first equation is exactly the Euler-Lagrange equation associated to $L$ for a path $x(\tau )$. Then, if we consider the restriction of $\tilde{\mathcal{L}}$ over $U$, this first equation is always satisfied.

For the second equation, we develop the left hand side which gives
\begin{equation}
\left(\frac{d}{d\tau}\left[\frac{\partial L}{\partial v}\left(\star_\tau\right)\right]-\frac{\partial L}{\partial x}\left(\star_\tau\right)\right)\frac{dx(\tau)}{d\tau}=0
\end{equation}
which is also always satisfied over $U$.

\begin{remark}
The main point is that this property comes from the specific expression of the total derivative of $L (\star_{\tau} )$. Here again, we need to use the chain rule. The same computation in the fractional case will lead some difficulties.
\end{remark}

\subsection{The fractional case} \label{sec:fractional}

In this Section, we extend the previous construction to the fractional case. We have divided the construction in several steps in order to discuss separately each of the difficulties involved.

\subsubsection{Step 1 - Construction of the extended Lagrangian}

As reminded in Section \ref{jostclassic}, the extended Lagrangian is obtained by rewriting the second term of equation (\ref{Linv2}) as an integral over the same interval $[t_a ,t_b ]$.\\

The problem is to be able to give an explicit expression for $\di_c D_{\tau_{a}+}^{\alpha}(\varphi_s^1\circ x\circ (\varphi_s^0)^{-1})(\tau)$ as an expression of $\di_c D_{\tau_{a}+}^{\alpha}(\varphi_s^1\circ x)(t)$.\\

Let $y=\varphi_s^1\circ x$, then
$$
\di_c D_{\tau_a+}^{\alpha}(y\circ (\varphi_s^0)^{-1})(\tau)=\frac{1}{\Gamma(1-\alpha)}\frac{d}{d\tau}\int_{\tau_a}^{\tau} \frac{1}{(\tau-p)^{\alpha}}(y\circ (\varphi_s^0)^{-1})(p)dp .
$$
We perform the change of variables $v=(\varphi_s^0)^{-1}(p)$ denoting $t=(\varphi_s^0)^{-1}(\tau)$. We then obtain:
\begin{equation}
\label{gf}
\di_c D_{\tau_a+}^{\alpha}(y\circ (\varphi_s^0)^{-1})(\tau)  = \frac{1}{\Gamma(1-\alpha)}\frac{d}{dt}\left(\int_a^t\frac{1}{(\varphi_s^0(t)-\varphi_s^0(v))^{\alpha}} y(v)\frac{d\varphi_s^0(v)}{dv}dv\right) \frac{1}{\frac{d\varphi_s^0(t)}{dt}} .
\end{equation}

We have here an illustration of the difficulties which come into play by adapting the Jost method. Indeed, without any assumptions, there exists no simple relations between the quantities $\di_c D_{\tau_a+}^{\alpha}(y\circ (\varphi_s^0)^{-1})(\tau)$ and $\di_c D^{\alpha}_{a+} (y)(t)$ contrary to the classical case. This is the classical chain rule problem with fractional derivatives. In order to solve this problem, we introduce a special class of symmetry groups.\\

First, we see that a relation between $\di_c D_{\tau_a+}^{\alpha}(y\circ (\varphi_s^0)^{-1})(\tau)$ and $\di_c D^{\alpha}_{a+} (y)(t)$ can be obtained if one consider one parameter group of diffeomorphisms $\{ \phi_s^0 \}$ satisfying 
\begin{equation}
\phi_s^0 (t) -\phi_s^0 (v) =\alpha (s) (t-v) , 
\end{equation}
for all $t,v\in [a,b]$ and all $s\in \R$. Specializing $v$ to a given value, we deduce that for all $s\in \R$, we have 
\begin{equation}
\phi_s^0 (t)=\alpha (s) t+\beta (s) ,
\end{equation}
i.e. that $\phi_s^0$ is an affine function for all $s\in \R$. Of course, the group property induces some constraints on the functions $\alpha$ and $\beta$. In particular, they must satisfy 
\begin{equation}
\label{rewriting}
\alpha (s+s')=\alpha (s)+\alpha (s') ,\ \ \mbox{\rm and}\ \ \ 
\beta (s+s') = \alpha (s) \beta (s') +\beta (s) , 
\end{equation}
with $\alpha (0)=1$ and $\beta (0) =0$. \\

The first condition of (\ref{rewriting}) implies that $\alpha$ must be an {\it exponential function}, i.e. that 
\begin{equation}
\alpha (s)=e^{\lambda s} , 
\end{equation}
for a certain $\lambda \in \R$. We then are leaded to the following class of symmetries groups:

\begin{definition}[Admissible groups]
A local group of transformations $\{ \phi_s =(\phi_s^0 ,\phi_s^1 )\}_{s\in \R}$ is said to be admissible,  if for all $s\in \R$, $\phi_s^0$ is an affine function of $t$ of the form 
\begin{equation}
\label{admigroup}
\phi_s^0 (t) = e^{\lambda s} t + \beta (s) , 
\end{equation}
with $\beta (s)$ satisfying $\beta (s+s') = e^{\lambda s} \beta (s') +\beta (s)$ for all $s,s'\in \R$ and $\beta (0)=0$.  
\end{definition}

Examples of admissible groups are given for example by the {\it translation group} $\varphi_s^0 (t) =t+s$ or a {\it scaling group} defined by $\phi_s^0 (t)=e^{cs} t$ where $c$ is a constant.\\

The main property of admissible groups is that a version of the chain rule property can be proved. Precisely, we have:

\begin{lemma}
Let $\{ \phi_s =(\phi_s^0 ,\phi_s^1 )\}_{s\in \R}$ be an admissible group. Then, we have for $0<\alpha \leq 1$ and for all $y\in AC ([a,b],\R^n)$:
\begin{equation}
\tag{$CR_{\alpha}$}
\label{cpalpha}
\di_c D_{\phi_s^0(a)+}^{\alpha}(y\circ (\phi_s^0)^{-1})(\tau) =
\di_c D^{\alpha}_{a+} (y)(t)\frac{1}{\left ( \di\frac{d\phi_s^0}{dt} \right )^{\alpha}} .
\end{equation}
\end{lemma}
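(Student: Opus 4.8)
The plan is to start directly from the integral representation (\ref{gf}), which is already available for an arbitrary one-parameter group, and then to let the affine structure forced by admissibility collapse the two distinct sources of $s$-dependence into a single scalar factor. Writing $y=\phi_s^1\circ x$ and $t=(\phi_s^0)^{-1}(\tau)$, formula (\ref{gf}) reads
\[
\di_c D_{\phi_s^0(a)+}^{\alpha}(y\circ (\phi_s^0)^{-1})(\tau) = \frac{1}{\Gamma(1-\alpha)}\frac{d}{dt}\left(\int_a^t\frac{y(v)}{(\phi_s^0(t)-\phi_s^0(v))^{\alpha}}\,\frac{d\phi_s^0(v)}{dv}\,dv\right)\frac{1}{\di\frac{d\phi_s^0(t)}{dt}}.
\]
The first step is simply to insert the explicit admissible form $\phi_s^0(t)=e^{\lambda s}t+\beta(s)$ into this expression.

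The second step records the two elementary consequences of affineness that drive everything. On one hand the kernel rescales cleanly, $\phi_s^0(t)-\phi_s^0(v)=e^{\lambda s}(t-v)$, so that $(\phi_s^0(t)-\phi_s^0(v))^{\alpha}=e^{\lambda s\alpha}(t-v)^{\alpha}$; on the other hand the slope $d\phi_s^0/dt=e^{\lambda s}$ is a constant independent of both $t$ and $v$. Substituting, the Jacobian $d\phi_s^0(v)/dv=e^{\lambda s}$ factors out of the integral, the kernel contributes $e^{-\lambda s\alpha}$, and the outer factor contributes $e^{-\lambda s}$. Because $e^{\lambda s}$ does not depend on $t$, it commutes with the operator $d/dt$, and collecting the three powers yields the single constant $e^{\lambda s}\cdot e^{-\lambda s\alpha}\cdot e^{-\lambda s}=e^{-\lambda s\alpha}$.

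The third step is to read off what remains. After extracting this constant the surviving expression is exactly the fractional derivative with base point $a$ applied to $y$, as given by the representation underlying (\ref{gf}), so that
\[
\di_c D_{\phi_s^0(a)+}^{\alpha}(y\circ (\phi_s^0)^{-1})(\tau)=e^{-\lambda s\alpha}\,\di_c D_{a+}^{\alpha}(y)(t).
\]
Since $d\phi_s^0/dt=e^{\lambda s}$, we rewrite $e^{-\lambda s\alpha}=\left(\di\frac{d\phi_s^0}{dt}\right)^{-\alpha}$, which is precisely the claimed identity $(CR_{\alpha})$.

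As for technical points and the location of the difficulty: the use of (\ref{gf}) presupposes that $\phi_s^0$ is a diffeomorphism, so that the change of variables $v=(\phi_s^0)^{-1}(p)$ and the chain rule $d/d\tau=(d\phi_s^0/dt)^{-1}d/dt$ for the outer integer-order derivative are legitimate; this is automatic for the orientation-preserving affine maps at hand, and the hypothesis $y\in AC([a,b],\R^n)$ guarantees that the fractional derivatives exist almost everywhere, as noted after Definition \ref{fracderiv}. I do not expect any genuine obstacle in the computation itself: the only step requiring care is pulling the constant $e^{\lambda s}$ through $d/dt$, and this is harmless precisely because admissibility forces the slope of $\phi_s^0$ to be independent of $t$. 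This is in fact the whole conceptual content, since for a non-affine $\phi_s^0$ the factor $(\phi_s^0(t)-\phi_s^0(v))^{-\alpha}$ would retain a true joint dependence on $t$ and $v$ and could not be reduced to the standard kernel $(t-v)^{-\alpha}$; that irreducibility is exactly the manifestation of the failure of the fractional chain rule, and it is neutralized here only by the affineness built into the definition of an admissible group.
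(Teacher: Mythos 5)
Your proof is correct and follows exactly the route the paper itself takes: the lemma is stated without a formal proof, but the preceding derivation of formula (\ref{gf}) together with the observation that admissibility forces $\phi_s^0(t)-\phi_s^0(v)=e^{\lambda s}(t-v)$ and $d\phi_s^0/dt=e^{\lambda s}$ is precisely your argument, and your bookkeeping of the three factors $e^{-\lambda s\alpha}\cdot e^{\lambda s}\cdot e^{-\lambda s}=e^{-\lambda s\alpha}=\left(\frac{d\phi_s^0}{dt}\right)^{-\alpha}$ is accurate. Your closing remark about why non-affine $\phi_s^0$ cannot work matches the paper's own motivation for introducing admissible groups.
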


\begin{remark}
The admissibility condition coupled with the localization assumptions (\ref{delfim1}) and (\ref{delfim2}) imply strong constraints. Precisely, we have:

\begin{lemma}
A one parameter group of diffeomorphisms $\{ \phi_s^0 \}$ acting on $[a,b]$ is admissible and satisfies the localization assumptions (\ref{delfim1}) and (\ref{delfim2}) if and only if it is of the form 
\begin{equation}
\mathbb{S}_{a,\lambda} =\{ \phi_s^0 (t) =e^{\lambda s}(t-a)+a \}_{s\in \R}
\end{equation}
for some $\lambda \in \R$. 
\end{lemma}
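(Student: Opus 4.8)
The plan is to prove both implications by exploiting the fact that admissibility already forces the affine form $\phi_s^0 (t) = e^{\lambda s} t + \beta (s)$, so that the only remaining freedom is the choice of the translation part $\beta(s)$; the localization hypothesis then pins $\beta$ down completely. In this sense the statement is purely the interplay of two algebraic constraints, and no analytic machinery beyond the two lemmas already established is required.

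For the easy ($\Leftarrow$) direction I would start from $\phi_s^0 (t) = e^{\lambda s}(t-a)+a$ and rewrite it as $\phi_s^0 (t) = e^{\lambda s} t + a(1-e^{\lambda s})$, reading off $\beta (s) = a(1-e^{\lambda s})$. The normalization $\beta(0)=0$ is immediate, and a one-line substitution confirms the cocycle relation $\beta(s+s') = e^{\lambda s}\beta(s')+\beta(s)$, so the group is admissible in the sense of the definition. Since $\phi_s^0 (a)=a$ for every $s$, the localization Lemma applies directly and yields (\ref{delfim1}) and (\ref{delfim2}).

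For the ($\Rightarrow$) direction I would first invoke admissibility to write $\phi_s^0 (t) = e^{\lambda s} t + \beta (s)$ for some $\lambda \in \R$. Evaluating the localization form (\ref{delfim1}) at $t=a$ gives $\phi_s^0 (a)=a$ for all $s$; substituting $t=a$ into the affine expression then yields $e^{\lambda s} a + \beta (s) = a$, whence $\beta (s) = a(1-e^{\lambda s})$. Feeding this back produces $\phi_s^0 (t) = e^{\lambda s} t + a(1-e^{\lambda s}) = e^{\lambda s}(t-a)+a$, which is exactly the element of $\mathbb{S}_{a,\lambda}$.

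There is no genuine obstacle in the argument; the one point deserving care is the compatibility of the two descriptions of $\beta$ — the cocycle identity coming from admissibility and the explicit value $a(1-e^{\lambda s})$ forced by localization. I would check that these are automatically consistent, since inserting the explicit $\beta$ into the cocycle relation reduces to the identity $a(1-e^{\lambda(s+s')}) = e^{\lambda s}a(1-e^{\lambda s'})+a(1-e^{\lambda s})$, so that no hidden restriction on $\lambda$ appears and the family is genuinely parametrized by $\lambda\in\R$. It is also worth flagging, in line with the surrounding remark, that the translation group $\phi_s^0(t)=t+s$ is admissible but fails localization (as $\phi_s^0(a)=a+s\neq a$), which is precisely why the characterization collapses to the single one-parameter family of scalings centred at $a$.
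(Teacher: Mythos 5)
Your proof is correct, and it fills in exactly the computation the paper dismisses with ``This is a simple computation'': both directions reduce to matching the translation part $\beta(s)$ of the admissible affine form against the constraint $\phi_s^0(a)=a$ forced by the localization form (\ref{delfim1}). The consistency check of the cocycle identity and the remark about the translation group are welcome additions but not needed for the equivalence itself.
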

 
\begin{proof}
This is a simple computation.
\end{proof}

Many examples deal with the case $a=0$. In this case, the set of admissible groups satisfying the localization assumptions is reduced to the {\it group of dilatations}. 
\end{remark}

Under this assumption, one can easily rewrite the invariance condition as follows:

\begin{lemma}
\label{lemma}
Let $\{ \phi_s\}_{s\in \R}$ be an admissible local group of transformations.  If the Lagrangian functional $\mathcal{L}$ is invariant under the action of the one parameter group of diffeomorphisms $\{\phi_s\}_{s\in \R}$, then for any subinterval $I=[t_a ,t_b ]$ of $[a,b]$ and $x$ a smooth function defined over $I$ we have
\begin{equation}
\label{inv}
\int_{t_a}^{t_b} L\left(t,x(t),\di_{c}D_{a+}^{\alpha}x(t)\right)dt= \int_{t_a}^{t_b} L\left(\varphi_s^0(t),\varphi_s^1\circ x(t),\di_c D_{a+}^{\alpha}(\varphi_s^1\circ x)(t)\frac{1}{\left ( \frac{d\varphi_s^0}{dt}\right ) ^{\alpha} }\right)\frac{d\varphi_s^0(t)}{dt}dt.
\end{equation}
\end{lemma}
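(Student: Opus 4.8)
The plan is to derive \eqref{inv} from the explicit invariance hypothesis \eqref{Linv2} by a single change of variables on the transported interval, using the chain-rule relation $(CR_\alpha)$ to convert the fractional derivative based at $\tau_a=\varphi_s^0(a)$ into one based at $a$. Admissibility is what makes this clean: by \eqref{admigroup} we have $\varphi_s^0(t)=e^{\lambda s}t+\beta(s)$, so $\varphi_s^0$ is an orientation-preserving affine diffeomorphism whose Jacobian $\frac{d\varphi_s^0}{dt}=e^{\lambda s}>0$ is a positive constant. This constancy and positivity will let every substitution go through without sign or monotonicity issues.

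First I would invoke Definition \ref{def:1} in its explicit form \eqref{Linv2} for the subinterval $I=[t_a,t_b]$. This equates $\mathcal{L}_{\alpha,a,I}(x)$ with the integral of $L$ over $\tilde I=[\mu_a,\mu_b]=[\varphi_s^0(t_a),\varphi_s^0(t_b)]$ of the transform $\tilde x(\tau)=\varphi_s^1\circ x\circ(\varphi_s^0)^{-1}(\tau)$, its fractional derivative being based at $\tau_a=\varphi_s^0(a)$. Next I would perform the change of variables $\tau=\varphi_s^0(t)$ in that right-hand integral. Since $\varphi_s^0$ is an increasing diffeomorphism it maps $[t_a,t_b]$ onto $[\mu_a,\mu_b]$, it yields $d\tau=\frac{d\varphi_s^0(t)}{dt}\,dt$, and it turns the arguments $\tau$ and $\tilde x(\tau)$ into $\varphi_s^0(t)$ and $\varphi_s^1\circ x(t)$ respectively.

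The one substantive ingredient is the fractional-derivative slot. Setting $y=\varphi_s^1\circ x$, which lies in $AC([a,b],\R^n)$ because $x$ is smooth and $\varphi_s^1$ is a diffeomorphism, the base point $\tau_a=\varphi_s^0(a)$ of the right-hand fractional integral is exactly the one appearing in the chain-rule Lemma, so $(CR_\alpha)$ gives
\begin{equation*}
\di_c D_{\tau_a+}^{\alpha}\big(\varphi_s^1\circ x\circ(\varphi_s^0)^{-1}\big)(\tau)=\di_c D_{a+}^{\alpha}(\varphi_s^1\circ x)(t)\,\frac{1}{\left(\frac{d\varphi_s^0}{dt}\right)^{\alpha}}.
\end{equation*}
Inserting this expression together with the change of variables into the right-hand side of \eqref{Linv2} reproduces precisely the claimed identity \eqref{inv}.

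I expect no genuine obstacle at this stage: all the real difficulty has already been absorbed into the chain-rule Lemma $(CR_\alpha)$, whose proof is where admissibility (forcing $\varphi_s^0$ to be affine) is genuinely needed. The remaining points are purely bookkeeping — checking that the transported limits $\mu_a,\mu_b$ return to $t_a,t_b$ under $\tau=\varphi_s^0(t)$, confirming that the base point $\tau_a=\varphi_s^0(a)$ matches the hypothesis of $(CR_\alpha)$, and observing that the constancy of $\frac{d\varphi_s^0}{dt}=e^{\lambda s}$ allows the Jacobian factor to be carried cleanly through the integrand.
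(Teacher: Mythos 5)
Your proposal is correct and follows essentially the same route as the paper: starting from the explicit invariance condition \eqref{Linv2}, performing the change of variables $\tau=\varphi_s^0(t)$ (equivalently $t=(\varphi_s^0)^{-1}(\tau)$ as the paper writes it), and invoking the chain-rule property \eqref{cpalpha} for admissible groups to rewrite the fractional derivative based at $\tau_a=\varphi_s^0(a)$ in terms of the one based at $a$. Your additional bookkeeping remarks (positivity and constancy of the Jacobian $e^{\lambda s}$, matching of the base point with the hypothesis of the chain-rule lemma) only make explicit what the paper leaves implicit.
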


\begin{proof}
We perform the change of variable $t=\left ( \phi_s^0\right )^{-1} (\tau )$ in the integral
$$\int_{\mu_{a}}^{\mu_{b}} L\left(\tau,\varphi_s^1\circ x\circ (\varphi_s^0)^{-1}(\tau),\di_c D_{\tau_{a}+}^{\alpha}\left(\varphi_s^1\circ x\circ (\varphi_s^0)^{-1}(\tau)\right)\right)d\tau .$$
Using formula (\ref{gf}) and the assumption (\ref{cpalpha}), we deduce that
\begin{equation}
\label{Linv3}
\begin{split}
\int_{\mu_{a}}^{\mu_{b}} L\Big(\tau,\varphi_s^1\circ x\circ (\varphi_s^0)^{-1}(\tau),\di_c D_{\tau_{a}+}^{\alpha}(\varphi_s^1\circ x\circ (\varphi_s^0)^{-1}(\tau))\Big)d\tau\\
=\int_{t_a}^{t_b} L\left(\varphi_s^0(t),\varphi_s^1\circ x(t),\di_c D_{a+}^{\alpha}(\varphi_s^1\circ x)(t)\frac{1}{\left ( \frac{d\varphi_s^0(t)}{dt}\right )^{\alpha}}\right)\frac{d\varphi_s^0(t)}{dt}dt.
\end{split}
\end{equation}
The invariance condition (\ref{Linv2}) in Definition \ref{def:1} then reduces to (\ref{inv}).
\end{proof}

A useful consequence of the previous Lemma is the following classical but important result :

\begin{lemma}
Let $L$ be an autonomous Lagrangian, i.e. which does not depends on the time variable. Then, the associated functional is invariant under the {\it translation} group $\phi_s (t,x)=(t+s ,x)$.
\end{lemma}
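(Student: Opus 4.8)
The plan is to verify the invariance condition in the form already prepared by Lemma \ref{lemma}, exploiting the fact that the translation group is admissible. First I would record the data of the group $\phi_s(t,x)=(t+s,x)$: one has $\varphi_s^0(t)=t+s$, $\varphi_s^1=\mathds{1}$, hence $\frac{d\varphi_s^0}{dt}=1$ and $(\varphi_s^0)^{-1}(\tau)=\tau-s$. In particular this is an admissible group (with $\lambda=0$ and $\beta(s)=s$, for which $\beta(s+s')=s+s'=\beta(s')+\beta(s)$ and $\beta(0)=0$), so Lemma \ref{lemma} applies and the invariance condition (\ref{Linv2}) is equivalent to the rewritten identity (\ref{inv}).

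I would then substitute the translation data into the right-hand side of (\ref{inv}). Because $\frac{d\varphi_s^0}{dt}=1$, both the Jacobian factor and the chain-rule factor $\left(\frac{d\varphi_s^0}{dt}\right)^{-\alpha}$ equal $1$, and because $\varphi_s^1\circ x=x$ the argument of the fractional derivative reduces to $\di_c D_{a+}^{\alpha}x(t)$. The only remaining discrepancy between the two sides of (\ref{inv}) is the appearance of $\varphi_s^0(t)=t+s$ in the first slot of $L$. This is exactly where the hypothesis that $L$ is autonomous enters decisively: $L(\varphi_s^0(t),x(t),\di_c D_{a+}^{\alpha}x(t))=L(x(t),\di_c D_{a+}^{\alpha}x(t))$, so the right-hand side of (\ref{inv}) collapses to its left-hand side for every subinterval $I$ and every smooth $x$, which is the asserted invariance.

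The only point requiring real care — and the one I would display explicitly if I bypassed Lemma \ref{lemma} and worked straight from (\ref{Linv2}) — is the behaviour of the Caputo derivative under the base-point shift built into the definition of invariance. There the transform of $x$ is $\tau\mapsto x(\tau-s)$ and its fractional derivative is taken with the shifted base point $\tau_a=\phi_s^0(a)=a+s$; a single change of variable $q=p-s$ in the defining integral gives $\di_c D_{a+s+}^{\alpha}(x(\cdot-s))(\tau)=\di_c D_{a+}^{\alpha}x(\tau-s)$, which is precisely the content of the chain-rule lemma (\ref{cpalpha}) specialized to $\frac{d\phi_s^0}{dt}=1$. Combining this with the autonomy of $L$ and the change of variable $t=\tau-s$ in the integral recovers the left-hand side. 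I expect this commutation of translation with the base-point shift to be the most delicate step, since it is the place where the nonlocal character of the fractional derivative could a priori obstruct the argument; the unit Jacobian of the translation group is exactly what makes it go through cleanly, whereas for a general admissible group one genuinely needs the full strength of (\ref{cpalpha}).
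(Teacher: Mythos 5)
Your proposal is correct and follows exactly the route the paper intends: the paper states this lemma as ``a useful consequence of the previous Lemma'' without further detail, and your argument supplies precisely that derivation --- checking that the translation group is admissible (with $\lambda=0$, $\beta(s)=s$), observing that the identity (\ref{inv}) is equivalent to the invariance condition (\ref{Linv2}) because the change of variables in the proof of Lemma \ref{lemma} is unconditional, and then noting that the unit Jacobian and the autonomy of $L$ make (\ref{inv}) hold trivially. Your explicit verification of the base-point commutation $\di_c D_{(a+s)+}^{\alpha}(x(\cdot-s))(\tau)=\di_c D_{a+}^{\alpha}x(\tau-s)$ is a welcome addition that the paper leaves implicit in the chain-rule lemma (\ref{cpalpha}).
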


This result is less evident when one is dealing with the initial definition.\\

The previous Lemma suggests to introduce the following {\it extended Lagrangian}:

\begin{definition}[Extended Lagrangian]
\label{extended}
Let $L (t,x,v)$ be a given admissible Lagrangian. The extended Lagrangian associated to $L$ and denoted by $\tilde{L} (\tau, (t,x),(w,v))$ is defined as follows
\begin{equation}
\label{defL}
\tilde{L}_{\alpha} (\tau, (t,x),(w,v)):=L\left ( t,x,\di\frac{v}{w^{\alpha}} \right ) \cdot w .
\end{equation}
\end{definition}

The Lagrangian functional associated to $\tilde{L}$ and denoted by $\tilde{\mathcal{L}}$ is given by
\begin{equation}
\label{Lxy}
\begin{split}
\tilde{\mathcal{L}}_{{\alpha},[a,b]}(t,x)&=\int_a^b \tilde{L}\left(t(\tau),x(t(\tau)),\frac{dt(\tau)}{d\tau},\di_c D^{\alpha}_{a+} x(t(\tau))\right)d\tau\\
&=\int_a^b \tilde{L}(t,x,w,v)d\tau.
\end{split}
\end{equation}

\begin{remark}
It must be noted that the Lagrangian (\ref{Lxy}) mixes the classical and fractional derivatives even if at the beginning the fractional Lagrangian problem was only dealing with fractional derivatives. As a consequence, we reduce the complexity from the non autonomous to autonomous Lagrangian but we increase the complexity from the functional point of view dealing with multiple sort of derivatives.
\end{remark}

The invariance of the functional $\mathcal{L}$ under the local symmetry group $\phi_s$ can then be rewritten as the invariance of $\tilde{\mathcal{L}}$ under an {\it autonomous} group action. Precisely, we have:

\begin{lemma}[Extended variational symmetries]
\label{extendsym}
Assume that the Lagrangian functional $\mathcal{L}$ associated to $L$ is invariant under an admissible local symmetry group $\{ \phi_s\}_{s\in\R}$. Then, the extended Lagrangian functional $\tilde{\mathcal{L}}$ associated to the extended Lagrangian $\tilde{L}$ satisfies
\begin{equation}
\label{inv1}
\tilde{\mathcal{L}}_{\alpha,[a,b]}(t,x)=\tilde{\mathcal{L}}_{\alpha,[a,b]}(\phi_s(t,x))=\tilde{\mathcal{L}}_{\alpha,[a,b]}(\varphi_s^0(t),\varphi_s^1(x)),
\end{equation}
over the set of paths $\tau \mapsto (t(\tau ), x(\tau ))$ satisfying $t(\tau )=\tau$ and $x(\tau )$ is a solution of the Euler-Lagrange equation associated to $L$. We denote by $U$ this set.
\end{lemma}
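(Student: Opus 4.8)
The plan is to reduce the claimed invariance of $\tilde{\mathcal{L}}$ under the autonomous action $\phi_s$ to the already-established invariance of $\mathcal{L}$ as expressed in Lemma~\ref{lemma}. First I would unwind the definitions: on the set $U$ of paths with $t(\tau)=\tau$, the extended Lagrangian evaluated along the path collapses to the original one. Indeed, with $t(\tau)=\tau$ we have $w=\frac{dt(\tau)}{d\tau}=1$, so that, by Definition~\ref{extended},
\begin{equation}
\tilde{L}\Big(\tau,t(\tau),x(t(\tau)),\tfrac{dt(\tau)}{d\tau},\di_c D^{\alpha}_{a+}x(t(\tau))\Big)=L\big(\tau,x(\tau),\di_c D^{\alpha}_{a+}x(\tau)\big)\cdot 1,
\end{equation}
and therefore $\tilde{\mathcal{L}}_{\alpha,[a,b]}(t,x)=\mathcal{L}_{\alpha,[a,b]}(x)$ when restricted to $U$. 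This identifies the left-hand side of the desired equation (\ref{inv1}) with the left-hand side of the invariance relation (\ref{inv}).

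Next I would compute the right-hand side, namely $\tilde{\mathcal{L}}_{\alpha,[a,b]}(\phi_s(t,x))$, by applying the action $\phi_s=(\varphi_s^0,\varphi_s^1)$ to the path. The transformed path sends $\tau$ to $\big(\varphi_s^0(t(\tau)),\varphi_s^1(x(t(\tau)))\big)$; its first (classical) velocity component becomes $w=\frac{d\varphi_s^0(\tau)}{d\tau}$ and its fractional component is $\di_c D^{\alpha}_{a+}(\varphi_s^1\circ x)(\tau)$. Substituting into the definition of $\tilde{L}$ and using that, for an admissible group, $w^{\alpha}=\big(\frac{d\varphi_s^0}{dt}\big)^{\alpha}$, the integrand becomes exactly
\begin{equation}
L\Big(\varphi_s^0(\tau),\varphi_s^1\circ x(\tau),\di_c D^{\alpha}_{a+}(\varphi_s^1\circ x)(\tau)\,\tfrac{1}{(\frac{d\varphi_s^0}{dt})^{\alpha}}\Big)\,\tfrac{d\varphi_s^0(\tau)}{d\tau}.
\end{equation}
Here the factor $w=\frac{d\varphi_s^0}{d\tau}$ supplied by the $\cdot\,w$ in the definition of $\tilde{L}$ matches the Jacobian-type weight in (\ref{inv}), and the $w^{-\alpha}$ inside the third slot reproduces the chain-rule correction factor $(CR_{\alpha})$. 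Thus $\tilde{\mathcal{L}}_{\alpha,[a,b]}(\phi_s(t,x))$ equals precisely the right-hand integral of (\ref{inv}).

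Finally, invoking the invariance identity (\ref{inv}) from Lemma~\ref{lemma}, the two sides coincide, which is the assertion (\ref{inv1}). The delicate point to check carefully is the internal consistency of the transformed fractional argument: one must confirm that after applying $\phi_s$ the relevant base point is $a$ (not $\varphi_s^0(a)=\tau_a$) so that $(CR_{\alpha})$ applies in the stated form, and that the admissibility of the group is genuinely used to convert $w^{\alpha}$ into $\big(\frac{d\varphi_s^0}{dt}\big)^{\alpha}$ — for a non-affine $\varphi_s^0$ the factor $\frac{d\varphi_s^0}{dt}$ would depend on $t$ and break the clean correspondence. I expect this bookkeeping of base points and the role of the extended variable $w$ being set to $1$ on $U$ versus being differentiated off $U$ to be the main subtlety, since it is exactly the place where the classical Jost argument and its fractional analogue diverge.
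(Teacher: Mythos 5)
Your argument is correct and is exactly the one the paper intends: the paper states Lemma~\ref{extendsym} without an explicit proof, but it follows, as you say, by observing that on $U$ the extended Lagrangian collapses to $L$ (since $w=1$), that the transformed path produces precisely the integrand on the right-hand side of (\ref{inv}) because the factor $w$ and the correction $w^{-\alpha}$ in Definition~\ref{extended} were built to match the Jacobian weight and the chain-rule factor of ($CR_{\alpha}$), and then invoking Lemma~\ref{lemma}. Your closing remark about the base point remaining $a$ is also the right subtlety, and it is exactly what Lemma~\ref{lemma} (via the change of variables and admissibility) takes care of.
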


The restriction of the invariance relation on the set $U$ can not be avoid as in the classical case.

\subsubsection{Step 2 - Euler-Lagrange equations of the extended Lagrangian}

As already noted, the extended Lagrangian mixes the classical and fractional derivatives. Using formula (\ref{fel2}), we deduce that the Euler-Lagrange equations associated to the functional (\ref{Lxy})are given by:

\begin{equation}
\label{FEL}
\left\{\begin{split}
\frac{\partial \tilde{L}}{\partial x}(t,x,w,v)+D_{b-}^{\alpha}\left(\frac{\partial \tilde{L}}{\partial v}(t,x,w,v)\right) &=0 ,\\
\frac{\partial \tilde{L}}{\partial t}(t,x,w,v)-\frac{d}{d\tau}\left(\frac{\partial \tilde{L}}{\partial w}(t,x,w,v)\right) &=0.
\end{split}\right.
\end{equation}

The connection between the solutions of the initial fractional problem and those of the extended Lagrangian (\ref{defL}) are then :

\begin{lemma}[Euler-Lagrange equations for the extended Lagrangian]
The Euler-Lagrange equations associated to the extended Lagrangian of $L$ restricted to $U$ are given by
\begin{equation}
\label{FLL}
\begin{array}{r} (a)\\ \\(b)\end{array}\left\{\begin{split}
\frac{\partial L}{\partial x}(\star_{\tau})& +D_{b-}^{\alpha}\left(\frac{\partial L}{\partial v}(\star_{\tau})\right)=0\\
\frac{\partial L}{\partial t}(\star_{\tau})& -\frac{d}{d\tau}\left(L(\star_{\tau})-\alpha \di_c D^{\alpha}_{a+} x(\tau)\cdot\frac{\partial L}{\partial v}(\star_{\tau})\right)=0
\end{split}\right.
\end{equation}
where $(\star_{\tau})=(\tau,x(\tau),\di_c D^{\alpha}_{a+} x(\tau))$.
\end{lemma}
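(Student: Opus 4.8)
The plan is to start from the Euler-Lagrange system (\ref{FEL}), which the excerpt has already derived for the extended Lagrangian $\tilde L$ defined in (\ref{defL}), and to evaluate each of its terms along the paths of the set $U$ of Lemma \ref{extendsym}, on which $t(\tau)=\tau$ and hence $w=\dot t(\tau)=1$. The whole argument is thus a direct, if careful, differentiation followed by a substitution.

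First I would differentiate $\tilde L(\tau,(t,x),(w,v))=L(t,x,v/w^{\alpha})\cdot w$ with respect to each of its arguments, abbreviating $u:=v/w^{\alpha}$ for the third slot of $L$. The derivatives in $x$, $v$ and $t$ are immediate and give $\partial_x\tilde L=\partial_x L\cdot w$, $\partial_v\tilde L=\partial_v L\cdot w^{1-\alpha}$ (since $\partial u/\partial v=w^{-\alpha}$) and $\partial_t\tilde L=\partial_t L\cdot w$. The only term requiring real care is $\partial_w\tilde L$, because $w$ enters both through the explicit factor and through the third argument $u=v\,w^{-\alpha}$; the chain rule gives $\partial u/\partial w=-\alpha u/w$, so that $\partial_w\tilde L=L+w\,\partial_v L\cdot(-\alpha u/w)=L-\alpha u\,\partial_v L$. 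This is the step where the power $w^{\alpha}$ in the definition of $\tilde L$ produces the factor $\alpha$, and for $\alpha=1$ it reduces to the classical energy $L-v\,\partial_v L$ that drives the Jost first integral.

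Next I would restrict to $U$. Setting $w=1$ forces $u=v=\di_c D^{\alpha}_{a+}x(\tau)$, so that at $(\star_{\tau})=(\tau,x(\tau),\di_c D^{\alpha}_{a+}x(\tau))$ the four derivatives collapse to $\partial_x\tilde L=\partial_x L(\star_{\tau})$, $\partial_v\tilde L=\partial_v L(\star_{\tau})$, $\partial_t\tilde L=\partial_t L(\star_{\tau})$ and $\partial_w\tilde L=L(\star_{\tau})-\alpha\,\di_c D^{\alpha}_{a+}x(\tau)\cdot\partial_v L(\star_{\tau})$. Feeding the first two into the fractional (first) line of (\ref{FEL}) yields line $(a)$ of (\ref{FLL}), while feeding $\partial_t\tilde L$ and $\partial_w\tilde L$ into the classical (second) line of (\ref{FEL}) yields line $(b)$.

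The point that deserves justification, rather than being a genuine obstacle, is that the substitution $w=1$ may be carried out before the operators $D^{\alpha}_{b-}$ and $d/d\tau$ are applied. This is legitimate precisely because on $U$ the identity $t(\tau)=\tau$ holds for every $\tau$, so that $w=\dot t(\tau)\equiv 1$ as a function of $\tau$ and not merely at a single point; hence the operators act on the already-restricted functions $\partial_v L(\star_{\tau})$ and $L(\star_{\tau})-\alpha\,\di_c D^{\alpha}_{a+}x(\tau)\cdot\partial_v L(\star_{\tau})$, exactly as written in (\ref{FLL}). Once this is observed, the identification of (\ref{FLL}) with the restriction of (\ref{FEL}) is complete.
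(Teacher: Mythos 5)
Your proof is correct and follows essentially the same route as the paper: compute the four partial derivatives of $\tilde{L}(\tau,(t,x),(w,v))=L(t,x,v/w^{\alpha})\cdot w$ (with the factor $\alpha$ arising from $\partial_w(v w^{-\alpha})=-\alpha v/w^{\alpha+1}$), substitute them into the mixed Euler-Lagrange system (\ref{FEL}), and restrict to $U$ by setting $w\equiv 1$. Your added justification that $w\equiv 1$ holds identically on $U$, so the substitution commutes with the operators $D^{\alpha}_{b-}$ and $d/d\tau$, is a point the paper leaves implicit but does not change the argument.
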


\begin{proof}
This follows from a simple computation. Indeed, we have
\begin{equation}
\label{extendedformula}
\begin{split}
\frac{\partial\tilde{L}}{\partial t}(t,x,w,v)&=\frac{\partial L}{\partial t}\left(t,x,\frac{v}{w^{\alpha}}\right)w,\;\;\;\; \frac{\partial\tilde L}{\partial x}(t,x,w,v)=\frac{\partial L}{\partial x}\left(t,x,\frac{v}{w^{\alpha}}\right)w,\\
\frac{\partial\tilde{L}}{\partial v}(t,x,w,v)&=w^{1-\alpha} \frac{\partial L}{\partial v}\left(t,x,\frac{v}{w^{\alpha}}\right),\;\;\;\; \frac{\partial \tilde{L}}{\partial w}(t,x,w,v)=L\Big(t,x,\frac{v}{w^{\alpha}}\Big)-\alpha\frac{v}{w^{\alpha}}\frac{\partial L}{\partial v}\left(t,x,\frac{v}{w^{\alpha}}\right)
\end{split}
\end{equation}
where  $(t,x,w,v)=\left(t(\tau),x(t(\tau)),\frac{dt(\tau)}{d\tau},\di_cD^{\alpha}_{a+} x(t(\tau))\right)$. This concludes the proof.
\end{proof}

We recognize the form already obtained in the classical case. The first equation of (\ref{FLL}) corresponds to the classical fractional Euler-Lagrange equation (\ref{fel}) associated to the Lagrangian $L$. The second equation (\ref{FLL}(b)) plays the same role as the energy in the classical case and is sometimes called {\it the second Euler-Lagrange equation} when $\alpha =1$. However, and this is the main difference independently of technical difficulties, this quantity is not {\it a priori} satisfied by solutions of the fractional Euler-Lagrange equation. As a consequence, the usual correspondence between the solution of extended Euler-Lagrange equations and the initial fractional Euler-Lagrange equation is not guaranteed. We will return on this condition in the following. We then have :

\begin{lemma}
\label{lemmace}
Solutions $x(t)$ of the fractional Euler-Lagrange equations (\ref{fel}) are solutions of the extended Euler-Lagrange equations (\ref{FLL}) if and only if they satisfy
\begin{equation}
\tag{$CE_{\alpha}$}
\label{lemext}
\frac{\partial L}{\partial t}(\star_{\tau}) -\frac{d}{d\tau}\left(L(\star_{\tau})-\di_cD^{\alpha}_{a+} x(\tau)\cdot\frac{\partial L}{\partial v}(\star_{\tau})\right)=0 ,
\end{equation}
where $(\star_{\tau})=(\tau,x(\tau),\di_cD^{\alpha}_{a+} x(\tau))$.
\end{lemma}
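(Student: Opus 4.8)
The plan is to observe that the two components of the extended system (\ref{FLL}) play entirely different roles once we restrict attention to the set $U$ of paths with $t(\tau)=\tau$, so that the asserted equivalence is essentially bookkeeping. First I would write out the first extended equation (\ref{FLL})(a),
\[
\frac{\partial L}{\partial x}(\star_{\tau})+D_{b-}^{\alpha}\left(\frac{\partial L}{\partial v}(\star_{\tau})\right)=0, \qquad (\star_{\tau})=(\tau,x(\tau),\di_c D^{\alpha}_{a+} x(\tau)),
\]
and compare it with the fractional Euler--Lagrange equation (\ref{fel}). On $U$ one has $t(\tau)=\tau$ and $w=dt/d\tau=1$, so the abbreviation $(\star_{\tau})$ is literally the argument $(\star)$ occurring in (\ref{fel}); hence (\ref{FLL})(a) and (\ref{fel}) are one and the same equation. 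The consequence is that every solution $x$ of (\ref{fel}) satisfies the first extended equation for free, and its membership in the solution set of the full extended system (\ref{FLL}) is decided solely by the second equation (\ref{FLL})(b).

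With this reduction in hand the two implications are immediate. For the direct sense I would take $x$ to be a solution of (\ref{fel}) that also solves (\ref{FLL}); then in particular it solves (\ref{FLL})(b), which, read on $U$, is exactly the energy-type relation (\ref{lemext}). For the converse I would take $x$ solving (\ref{fel}) and satisfying (\ref{lemext}): the former supplies (\ref{FLL})(a) and the latter supplies (\ref{FLL})(b), so the full pair (\ref{FLL}) holds and $x$ lies in the extended solution set. Note that no fractional operator acts in the second equation, so none of the chain-rule or base-point subtleties discussed earlier intervene at this step; the whole difficulty of the construction has already been absorbed into Step~1 and into the passage to the restricted equations (\ref{FLL}).

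The one point that must be handled with care is the precise identification of (\ref{FLL})(b) with (\ref{lemext}) on $U$, that is, pinning down the exact coefficient of the term $\di_c D^{\alpha}_{a+} x(\tau)\cdot\partial_v L(\star_{\tau})$ sitting inside the $\tau$-derivative. For this I would reuse the explicit partials of the extended Lagrangian recorded in (\ref{extendedformula}), in particular $\partial_w\tilde L = L(t,x,v/w^{\alpha})-\alpha (v/w^{\alpha})\,\partial_v L(t,x,v/w^{\alpha})$ together with $\partial_t\tilde L|_{U}=\partial_t L(\star_{\tau})$, and evaluate them at $w=1$, $v=\di_c D^{\alpha}_{a+} x(\tau)$. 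Feeding these into the second Euler--Lagrange relation of the extended system (\ref{FEL}) yields the condition (\ref{lemext}) on $U$; this coefficient check is the only genuine computation, and once it is verified the equivalence follows formally from the decoupling observed in the first paragraph.
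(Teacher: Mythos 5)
Your argument is correct and is essentially the paper's own: the paper gives no separate proof of this lemma, treating it as an immediate corollary of the preceding lemma exactly as you do --- on $U$ equation (\ref{FLL})(a) coincides with the fractional Euler--Lagrange equation (\ref{fel}), so a solution of (\ref{fel}) belongs to the extended solution set precisely when it also satisfies (\ref{FLL})(b). One caveat on the coefficient check you rightly single out as the only genuine computation: evaluating $\partial_w\tilde L$ from (\ref{extendedformula}) at $w=1$ gives $L(\star_{\tau})-\alpha\,\di_c D^{\alpha}_{a+}x(\tau)\cdot\partial_v L(\star_{\tau})$, so the condition your argument actually produces carries the factor $\alpha$, in agreement with (\ref{FLL})(b) but not with the displayed condition ($CE_{\alpha}$), which omits it; this is an inconsistency internal to the paper (the two versions coincide only for $\alpha=1$), and your proof establishes the lemma with the $\alpha$-weighted condition.
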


\begin{remark}
This is precisely this point which is not well developed in the derivation of the Noether's theorem in \cite{jost} and which is not discussed in the paper of Frederico and Torres \cite{FT}.
\end{remark}

However, condition (\ref{lemext}) is not a consequence of the fractional Euler-Lagrange equations for the initial Lagrangian $L$. We provide in the following a numerical example.

\subsection{About the second Euler-Lagrange equation in the fractional calculus of variations}

We consider the two-dimensional example of the quadratic Lagrangian on $[a,b]=[0,1]$ :
\begin{equation}
\label{LagHO}
\begin{array}{rlcl}
L:& [0,1]\times \R^2 \times \R^2 & \longrightarrow & \R\\
 & (t,x,v) &  \longrightarrow & \frac{1}{2}\left(\|x\|^2+\|v\|^2 \right),
\end{array}
\end{equation}
where $x=(x_1,x_2), v=(v_1,v_2)=(\di_c D^{\alpha}_{a+} x_1,\di_c D^{\alpha}_{a+} x_2)$. The corresponding Euler-Lagrange equation (\ref{fel}) for the Lagrangian $L$ defined above has the form
\begin{equation}
\label{ELH}
D_{b-}^{\alpha}\circ \di_c D^{\alpha}_{a+} x+x=0.
\end{equation}
The relation (\ref{lemext}) reduces to 
\begin{equation}
Q_{\alpha}(x):=\frac{1}{2}\left(\|x\|^2-\|v\|^2\right)=\mbox{\rm const} .
\end{equation}

In the classical case, with $\alpha=1$, the equation of motion has the form $\ddot{x}(t)=x(t)$ and the exact solution is given by $x(t)=(x_1(t),x_2(t))=(c_1 e^{t}+c_2 e^{-t},d_1 e^{t}+d_2 e^{-t})$ for $t\in [0,1]$, where $c_1, c_2, d_1, d_2\in \R$. Applying the classical Noether's Theorem \ref{twCNT} we obtain explicit constant of motion.\\

For the simulations, we consider the Dirichlet boundary conditions for the Euler-Lagrange equation (\ref{ELH}): $x(0)=(x_1(0),x_2(0))=(1,2)$, $x(1)=(x_1(1),x_2(1))=(2,1)$. In order to derive the approximate behavior of the fractional boundary problem, we discretize the integral form of the Euler-Lagrange equation (more information can be found in Appendix \ref{appendix:1}). Let $X=(X_1,X_2)$ denotes the approximation of the solution $x=(x_1,x_2)$. The behavior of the approximate solutions $X$ and the simulations for $Q_{\alpha}(X)$ with respect to different values of the order of derivative are presented on the Figures \ref{f1} and \ref{f2}, in the case of $\alpha=1$ and $\alpha\in (0,1]$, respectively.\\

In the classical case $\alpha=1$ we obtain the following:

\begin{figure}[!h]
\centerline{%
   \includegraphics[width=0.9\textwidth]{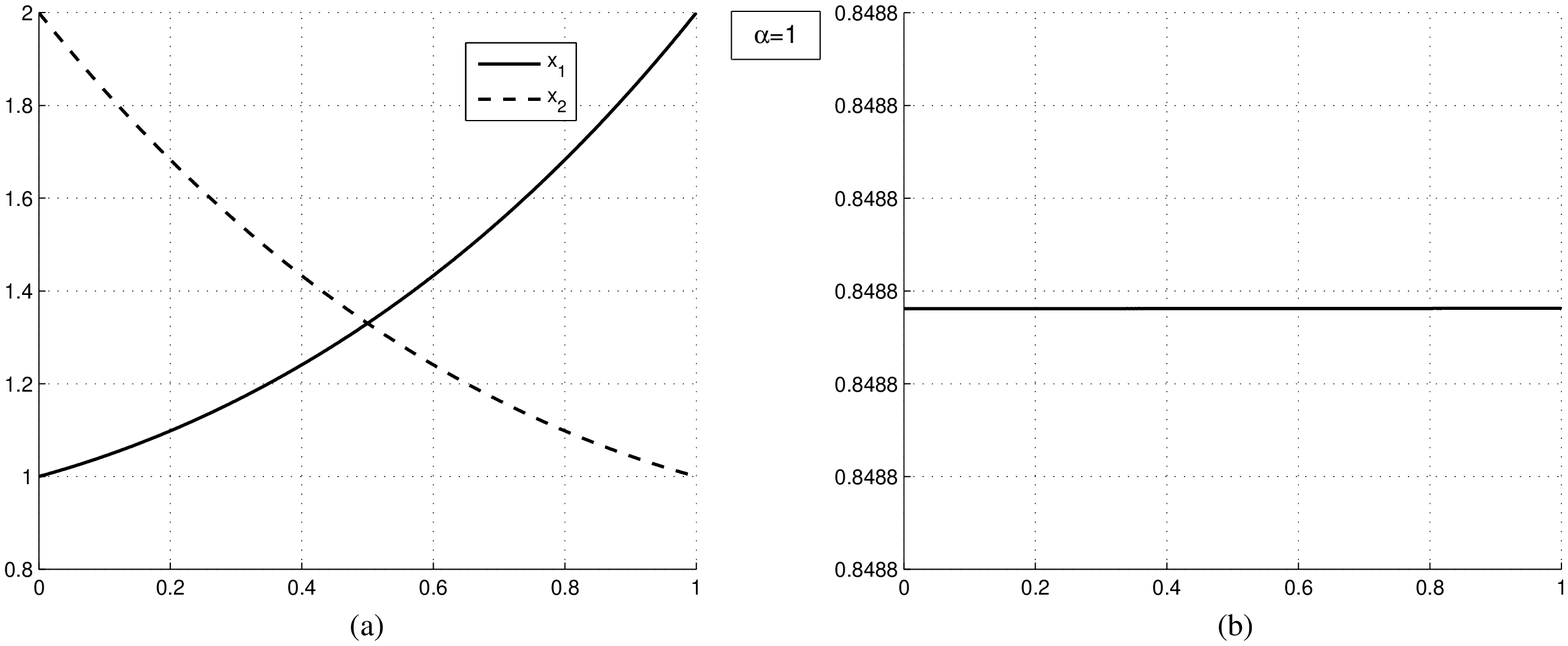}}
    \caption{Behavior of solution $X=(X_1,X_2)$ of (\ref{ELH}) with $\alpha=1$ is given on the sub-figure (a) and the constant of motion in this case can be observed on the sub-figure (b).}
    \label{f1}
\end{figure}

In the fractional case, the picture is very different:

\begin{figure}[!h]
\centerline{%
   \includegraphics[width=0.9\textwidth]{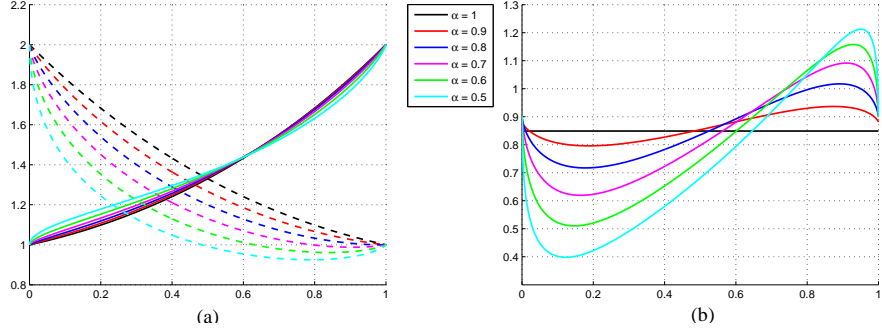}}
    \caption{Behaviors of solution $X=(X_1,X_2)$ of (\ref{ELH}) with respect to different values of $\alpha$ are given on the sub-figure (a) and respectively, behavior of $Q_\alpha(X)$ can be observed on the sub-figure (b).}
    \label{f2}
\end{figure}

The fractional version of the second Euler-Lagrange equation is clearly not satisfied at least in full generality. 

\subsection{Noether's theorem for the extended Lagrangian} 
\label{sec:noether}

In the case of extended Lagrangian the invariance condition (\ref{inv}) is a classical invariance relation for transformation groups which do not change the "time" variable. As remind in Section \ref{sec:intro}, this case was already studied by Frederico and Torres in \cite{FT} where they derive the corresponding Noether's theorem given in Theorem \ref{mix}. Using this result, we obtain:

\begin{theorem}[Noether's theorem for the extended Lagrangian] 
\label{NT}
Let $\tilde{L}$ is an invariant Lagrangian under the one-parameter group of diffeomorphisms $\{\Phi_s\}_{s\in\R}$. Then if $\tilde{\mathcal{L}}$ is a Lagrangian functional defined by Lagrangian $\tilde{L}$, then
\begin{equation} \label{thesis}
\begin{split}
 &\frac{d}{d\tau}\Big[ \frac{\partial \tilde{L}}{\partial w}\cdot\frac{d}{ds}(\varphi_s^0(t))|_{s=0}\Big]\\
 & + \bigg[\frac{\partial \tilde{L}}{\partial v}\cdot \di_cD_{a+}^{\alpha}\Big(\frac{d}{ds}(\varphi_s^1(x))|_{s=0}\Big)-D_{b-}^{\alpha}\left(\frac{\partial \tilde{L}}{\partial v}\right)\cdot\frac{d}{ds}(\varphi_s^1(x))|_{s=0}\bigg]=0
 \end{split}
 \end{equation}
over the solutions of the fractional Euler-Lagrange equations.
\end{theorem}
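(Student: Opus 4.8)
The plan is to apply Theorem \ref{mix} directly to the extended Lagrangian $\tilde{L}$, and then to rewrite the resulting identity by means of the extended Euler--Lagrange equations (\ref{FEL}). First I would observe that $\tilde{L}(\tau,(t,x),(w,v))$ is exactly of the type covered by Theorem \ref{mix}: the independent variable is $\tau$, the position is the vector $(t,x)$, the variable $t$ enters $\tilde{L}$ only through the classical velocity $w=dt/d\tau$ while $x$ enters only through the fractional velocity $v=\di_c D_{a+}^{\alpha}x$, and the group $\{\Phi_s\}=\{(\varphi_s^0,\varphi_s^1)\}$ acts on the position $(t,x)$ while leaving $\tau$ fixed. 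This last point is essential: because $\tau$ is not transformed, $\{\Phi_s\}$ is precisely a group of transformations \emph{without time transformation}, so the invariance hypothesis of Theorem \ref{mix} is met (this invariance being guaranteed, over the set $U$, by Lemma \ref{extendsym}). Writing the infinitesimal generator of $\{\Phi_s\}$ as the pair $\big(\frac{d}{ds}\varphi_s^0|_{s=0},\frac{d}{ds}\varphi_s^1|_{s=0}\big)$, the conclusion of Theorem \ref{mix} takes the form
\begin{equation}
\frac{\partial \tilde{L}}{\partial t}\cdot\frac{d\varphi_s^0}{ds}\Big|_{s=0}+\frac{\partial \tilde{L}}{\partial x}\cdot\frac{d\varphi_s^1}{ds}\Big|_{s=0}+\frac{\partial \tilde{L}}{\partial v}\cdot\di_c D_{a+}^{\alpha}\Big(\frac{d\varphi_s^1}{ds}\Big|_{s=0}\Big)+\frac{\partial \tilde{L}}{\partial w}\cdot\frac{d}{d\tau}\Big(\frac{d\varphi_s^0}{ds}\Big|_{s=0}\Big)=0.
\end{equation}

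Second, I would substitute the extended Euler--Lagrange equations (\ref{FEL}), namely $\partial_x\tilde{L}=-D_{b-}^{\alpha}(\partial_v\tilde{L})$ and $\partial_t\tilde{L}=\frac{d}{d\tau}(\partial_w\tilde{L})$, into this identity. The term $\partial_t\tilde{L}\cdot\frac{d\varphi_s^0}{ds}|_{s=0}$ becomes $\frac{d}{d\tau}(\partial_w\tilde{L})\cdot\frac{d\varphi_s^0}{ds}|_{s=0}$, and together with the already present term $\partial_w\tilde{L}\cdot\frac{d}{d\tau}\big(\frac{d\varphi_s^0}{ds}|_{s=0}\big)$ it recombines, by the ordinary Leibniz rule, into the single total derivative $\frac{d}{d\tau}\big[\partial_w\tilde{L}\cdot\frac{d\varphi_s^0}{ds}|_{s=0}\big]$. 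The two remaining terms, $\partial_v\tilde{L}\cdot\di_c D_{a+}^{\alpha}\big(\frac{d\varphi_s^1}{ds}|_{s=0}\big)$ and $-D_{b-}^{\alpha}(\partial_v\tilde{L})\cdot\frac{d\varphi_s^1}{ds}|_{s=0}$, are exactly the fractional bracket appearing in (\ref{thesis}). Collecting these gives the stated conservation law.

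The only genuinely delicate point is the first step: one must check that Theorem \ref{mix}, stated for a single position variable carrying \emph{both} a classical and a fractional velocity, applies verbatim to the present \emph{block} situation in which $t$ carries only the classical velocity $w$ and $x$ carries only the fractional velocity $v$. Because the two groups of variables enter $\tilde{L}$ through disjoint velocity slots, the mixed Noether identity decouples along the $(t,w)$ and $(x,v)$ blocks and no cross terms survive; this decoupling is precisely what makes the recombination in the second step produce a clean total derivative in $\tau$ for the $t$-part and a pure fractional bracket for the $x$-part. Everything after the invocation of Theorem \ref{mix} is then a routine substitution of (\ref{FEL}) followed by the classical product rule. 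In particular, no fractional integration by parts or fractional chain rule is needed at this stage: those difficulties have already been absorbed into the construction of $\tilde{L}$, into the admissibility assumption through the relation ($CR_\alpha$), and into the restriction to the set $U$.
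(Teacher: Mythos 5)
Your proposal is correct and follows exactly the route the paper takes: the paper derives Theorem \ref{NT} by applying Theorem \ref{mix} to the extended Lagrangian $\tilde{L}$ (with position $(t,x)$, the classical velocity attached to $t$ and the fractional one to $x$), then substituting the extended Euler--Lagrange equations (\ref{FEL}) and recombining the $t$-block into a total $\tau$-derivative via the Leibniz rule. You have merely written out the substitution and the block decoupling that the paper leaves implicit in its one-line appeal to Theorem \ref{mix}.
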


We are now ready to formulate the main result concerning the fractional generalization of the Jost's method.

\subsection{A first tentative : a weak fractional Noether's theorem}

We now derive the Noether's theorem which can be derived using a fractional version of the Jost's method :

\begin{theorem}[A fractional Noether's theorem]
Suppose $G=\{ \phi_s \}_{s\in \R}$ is a one parameter group of symmetries of the variational problem $\di\mathcal{L}_{\alpha,[a,b]}(x)=\di\int_a^b L\left(t,x(t),\di_{c}D_{a+}^{\alpha}x(t)\right)dt$ satisfying the chain rule property. Let 
\begin{equation}
X= \zeta (t) \di\frac{\partial}{\partial t} +\xi (x) \di\frac{\partial}{\partial x} ,
\end{equation}
be the infinitesimal generator of $G$. Assume also that for any solutions of the Euler-Lagrange equation we have 
\begin{equation}
\tag{$CE_{\alpha}$}
\label{lemext}
\frac{\partial L}{\partial t}(\star_{\tau}) -\frac{d}{d\tau}\left(L(\star_{\tau})-\di_cD^{\alpha}_{a+} x(\tau)\cdot\frac{\partial L}{\partial v}(\star_{\tau})\right)=0 ,
\end{equation}
where $(\star_{\tau})=(\tau,x(\tau),\di_cD^{\alpha}_{a+} x(\tau))$. Then we have:
\begin{equation}
\label{app1}
\frac{d}{dt}\left[\left(L-\di_cD_{a+}^{\alpha}x\cdot\frac{\partial L}{\partial v}\right)\zeta\right]+\left[\frac{\partial L}{\partial v}\cdot\di_cD_{a+}^{\alpha}\left(\xi\right)-D_{b-}^{\alpha}\left(\frac{\partial L}{\partial v}\right)\cdot\xi\right]=0.
\end{equation}
\end{theorem}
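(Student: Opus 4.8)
The plan is to run the fractional Jost machine assembled in the preceding subsections: transfer the invariance of $\mathcal{L}$ to the extended functional $\tilde{\mathcal{L}}$, apply the Noether theorem for the extended Lagrangian (Theorem \ref{NT}), and then translate its conclusion back to $L$ using the explicit partial derivatives of $\tilde{L}$ recorded in (\ref{extendedformula}). Throughout I would read the infinitesimal generator $X=\zeta(t)\di\frac{\partial}{\partial t}+\xi(x)\di\frac{\partial}{\partial x}$ componentwise, so that $\zeta(t)=\frac{d}{ds}\varphi_s^0(t)|_{s=0}$ and $\xi(x)=\frac{d}{ds}\varphi_s^1(x)|_{s=0}$ are exactly the quantities appearing in Theorem \ref{NT}.

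First I would arrange the hypotheses so that Theorem \ref{NT} becomes applicable. Since $G$ is assumed to satisfy the chain rule property $(CR_\alpha)$, Lemma \ref{lemma} rewrites the invariance of $\mathcal{L}$ as the relation (\ref{inv}), and by Definition \ref{extended} together with Lemma \ref{extendsym} this is precisely the invariance of $\tilde{\mathcal{L}}$ under the induced autonomous group action, restricted to the set $U$ of lifted paths $\tau\mapsto(t(\tau),x(\tau))$ with $t(\tau)=\tau$ and $x$ a solution of the fractional Euler--Lagrange equation (\ref{fel}). This is the step where admissibility is essential: without $(CR_\alpha)$ one cannot produce a base-point-free expression for the transformed fractional derivative, and the passage to $\tilde{L}$ would break down.

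Next I would apply Theorem \ref{NT} to $\tilde{L}$ along paths in $U$, obtaining
\[
\frac{d}{d\tau}\left[\frac{\partial\tilde{L}}{\partial w}\cdot\zeta\right]
+\left[\frac{\partial\tilde{L}}{\partial v}\cdot\di_c D_{a+}^{\alpha}(\xi)
-D_{b-}^{\alpha}\!\left(\frac{\partial\tilde{L}}{\partial v}\right)\cdot\xi\right]=0 ,
\]
after which only the evaluation of the partial derivatives on $U$ remains. On $U$ one has $t(\tau)=\tau$, hence $w=\frac{dt}{d\tau}=1$ and $\frac{v}{w^\alpha}=\di_c D_{a+}^{\alpha}x$; substituting $w=1$ into (\ref{extendedformula}) gives $\frac{\partial\tilde{L}}{\partial v}=\partial_v L(\star_{\tau})$ and the energy-type density $\frac{\partial\tilde{L}}{\partial w}=L(\star_{\tau})-\alpha\,\di_c D_{a+}^{\alpha}x\cdot\partial_v L(\star_{\tau})$, which is exactly the quantity entering the second extended Euler--Lagrange equation (\ref{FLL}(b)). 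Inserting these, relabelling $\tau$ as $t$, and reading off the conservation law yields the asserted identity (\ref{app1}); this last part is the routine substitution I would not grind through.

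The step I expect to be the genuine obstacle, and the one the whole section is designed to isolate, is the correspondence between solutions of the \emph{original} fractional Euler--Lagrange equation and solutions of the \emph{extended} system (\ref{FEL}). Theorem \ref{NT} is a statement about critical points of $\tilde{\mathcal{L}}$, so before invoking it I must know that a solution $x$ of (\ref{fel}), lifted to $(t(\tau)=\tau,x(\tau))\in U$, actually solves the extended equations. By Lemma \ref{lemmace} this holds if and only if the second extended Euler--Lagrange equation is satisfied, that is, precisely under the hypothesis $(CE_\alpha)$. In the classical case $\alpha=1$ this condition is automatic, following from the chain-rule computation of Section \ref{jostclassic}; in the fractional case it is a genuinely independent constraint which, as the numerical example of Figure \ref{f2} shows, generically fails. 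Imposing $(CE_\alpha)$ as an explicit assumption is therefore exactly what repairs the argument, at the price that the resulting statement is only a \emph{weak} Noether theorem. I would present this solution-correspondence as the crux of the proof and keep the surrounding manipulations as the mechanical verifications described above.
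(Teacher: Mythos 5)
Your proposal is correct and takes exactly the paper's route: the paper's entire proof of this theorem is the single sentence that it ``follows from Theorem \ref{NT} and Lemma \ref{lemmace}'', and you have supplied precisely those steps --- transfer of the invariance to the extended functional via Lemmas \ref{lemma} and \ref{extendsym}, the solution correspondence through the hypothesis $(CE_{\alpha})$, and the substitution of (\ref{extendedformula}) on the set $U$ where $w=1$. The only wrinkle is that your (correct) evaluation $\frac{\partial\tilde{L}}{\partial w}\big|_{w=1}=L-\alpha\,\di_cD_{a+}^{\alpha}x\cdot\partial_v L$ carries a factor $\alpha$ that is absent from the stated identity (\ref{app1}); this is an inconsistency already present in the paper between (\ref{FLL}(b)) and the condition $(CE_{\alpha})$, not a defect of your argument, but the final ``routine substitution'' you skip would in fact produce the version with the factor $\alpha$.
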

 
The proof follows from Theorem \ref{NT} and Lemma \ref{lemmace}.

\begin{remark}
In the case of $\alpha=1$, we recover the classical Noether theorem because condition ($CE_1$) and the chain rule property are automatically satisfied and the second term in (\ref{app1}) reduces to the total derivative of $\frac{\partial L}{\partial v}\cdot \xi$.
\end{remark}

In the case of $\alpha\ne 1$, with no transformation in time, the one parameter group satisfies the chain rule property as $\phi_s^0 (t)=t$ and $\zeta=0$. However, we {\bf do not recover the classical fractional Noether theorem}. Indeed, there is no reasons that the solutions of the Euler-Lagrange equations satisfy the condition (\ref{lemext}) and in fact, most of the time, they do not.\\

Moreover, in the case of $\alpha\ne 1$, with transformation in time, we consider as an example the special case of the translation group
$$
\varphi_s(t,x)=(t+s,x),
$$
from which we conclude that $\zeta=1$ and $\xi=0$. This group satisfies the chain rule property. Assuming that the condition (\ref{FEL}(b)) given by 
\begin{equation} 
\label{rel}
\frac{d}{dt}\left(L-\di_cD^{\alpha}_{a+} x\cdot\frac{\partial L}{\partial v}\right)=0,
\end{equation}
is satisfied, we derive as a conservation law the quantity (\ref{rel})!\\

These two remarks tell us that something is going wrong in the fractional generalization of the Jost's method. This point is discussed and solved in the next Section.

\subsection{A Jost's type proof of the fractional Noether theorem}

The previous tentative does not give the right answer. Where do we have made a too strong assumption in our computation ? As all the problems are clearly coming from the condition (\ref{lemext}) we must look at this condition and the reasons why we have introduced it. As we have said, the basic idea behind the Jost's method is to use the autonomous version of the fractional Noether theorem. In this case, one needs to ensure that the solutions that we consider are solution of the underlying Euler-Lagrange equations attached to the extended Lagrangian. However, doing so, we clearly ask for a too strong condition. The invariance relation by itself already provide a conserved quantity over the solution of the initial fractional Euler-Lagrange equation which is provided by the following {\it infinitesimal invariance criterion} (see \cite{FT},Theorem 17 p.840):

\begin{lemma}[Infinitesimal invariance criterion] 
If the Lagrangian function $\tilde{\mathcal{L}}$ is invariant under the one parameter group $\{ \phi_s =(\phi_s^0 ,\phi_s^1 ) \}_{s\in \R}$ then
we have 
\begin{equation}
\label{inficriterion}
\partial_t {\tilde{L}} . \di\frac{d\phi_s^0}{ds} \mid_{s=0} +
\partial_x {\tilde{L}} .\di\frac{d\phi_s^1}{ds} \mid_{s=0}
+
\partial_w {\tilde{L}} . \di\frac{d}{dt} \left ( \frac{d\phi_s^0}{ds} \mid_{s=0} \right )  +
\partial_v {\tilde{L}} .\di D_{a+}^{\alpha} \left ( \frac{d\phi_s^1}{ds} \mid_{s=0} \right )
=0
\end{equation}
\end{lemma}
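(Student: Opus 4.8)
The plan is to start from the invariance of the extended functional $\tilde{\mathcal{L}}$ furnished by Lemma~\ref{extendsym} and to differentiate it in the group parameter $s$ at $s=0$. By the subinterval form of the invariance condition (Definition~\ref{def:1}, carried over to $\tilde{\mathcal{L}}$ via Lemma~\ref{extendsym}), for every $[\tau_a,\tau_b]\subset[a,b]$ and every $s\in\R$ one has
\begin{equation}
\int_{\tau_a}^{\tau_b} \tilde{L}\Big(\varphi_s^0(t),\varphi_s^1(x),\tfrac{d}{d\tau}\varphi_s^0(t),\di_c D^{\alpha}_{a+}(\varphi_s^1(x))\Big)\,d\tau=\int_{\tau_a}^{\tau_b}\tilde{L}(t,x,w,v)\,d\tau .
\end{equation}
The structural feature that makes the extended formulation workable is that $\tilde{L}$ carries no explicit dependence on $\tau$ and the group $\phi_s$ leaves $\tau$ untouched; hence the limits of integration are fixed and the right-hand side is independent of $s$. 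I would therefore differentiate under the (fixed) outer integral with respect to $s$, which is licit since $\phi_s$ is $\CC^2$ in $(s,t,x)$ and the integrand depends smoothly on these data.

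Setting $s=0$ and using $\phi_0=\mathds{1}$ collapses the transformed arguments onto the base point $(t,x,w,v)$, and the chain rule applied to the composite integrand produces exactly four contributions, one per slot of $\tilde{L}$. The $t$-- and $x$--slots give at once $\partial_t{\tilde{L}}\cdot\frac{d\phi_s^0}{ds}\mid_{s=0}$ and $\partial_x{\tilde{L}}\cdot\frac{d\phi_s^1}{ds}\mid_{s=0}$. For the $w$--slot I would interchange $\frac{\partial}{\partial s}$ with the classical derivative $\frac{d}{d\tau}$ (Schwarz), so that $\frac{\partial}{\partial s}\big|_{s=0}\frac{d}{d\tau}\varphi_s^0$ becomes $\frac{d}{d\tau}\big(\frac{d\phi_s^0}{ds}\mid_{s=0}\big)$, which on $U$ (where $t(\tau)=\tau$) is $\frac{d}{dt}\big(\frac{d\phi_s^0}{ds}\mid_{s=0}\big)$. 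For the $v$--slot I would pull $\frac{\partial}{\partial s}$ through the fractional operator, using its linearity, to obtain $D_{a+}^{\alpha}\big(\frac{d\phi_s^1}{ds}\mid_{s=0}\big)$. Collecting the four terms, the differentiated identity reads $\int_{\tau_a}^{\tau_b}(\,\cdots\,)\,d\tau=0$, where the integrand is precisely the left-hand side of (\ref{inficriterion}).

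Finally, since this integral vanishes for every subinterval $[\tau_a,\tau_b]$, letting the upper endpoint vary (equivalently, the fundamental lemma of the calculus of variations) forces the integrand to vanish pointwise almost everywhere, which is exactly (\ref{inficriterion}). The step I expect to be the genuine obstacle is the commutation of $\frac{\partial}{\partial s}$ with the fractional operator $D_{a+}^{\alpha}$: unlike the classical derivative, it is a singular integral followed by an ordinary derivative, so differentiating under its integral sign requires joint control of the kernel $(\,\cdot\,)^{-\alpha}$ and of $\frac{\partial}{\partial s}\varphi_s^1(x)$. This is where the $AC$-regularity of $x$ and the smoothness of $\phi_s$ in $s$ must be invoked carefully; the interchange with $\frac{d}{d\tau}$ and the differentiation under the fixed outer integral are comparatively routine once these regularity hypotheses are in place.
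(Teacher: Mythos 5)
Your proof is correct and takes essentially the same route as the paper's source for this statement: the paper itself gives no proof of the lemma but refers to \cite{FT} (Theorem 17), whose argument is exactly yours --- differentiate the fixed-boundary invariance identity with respect to $s$ at $s=0$, apply the chain rule slot by slot, commute $\partial_s$ with $\frac{d}{d\tau}$ and with $D_{a+}^{\alpha}$, and use the arbitrariness of the subinterval to pass from the integral identity to the pointwise one. Your closing remark correctly identifies the only step requiring care, namely differentiating under the singular integral defining the fractional operator.
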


As a consequence, using the extended variational symmetries Lemma \ref{extendsym} and formula (\ref{inficriterion}), we obtain:

\begin{lemma}
Suppose $G=\{ \phi_s \}_{s\in \R}$ is a one parameter group of symmetries of the variational problem $\di\mathcal{L}_{\alpha,[a,b]}(x)=\di\int_a^b L\left(t,x(t),\di_{c}D_{a+}^{\alpha}x(t)\right)dt$ satisfying the chain rule property. Let 
\begin{equation}
X= \zeta (t) \di\frac{\partial}{\partial t} +\xi (x) \di\frac{\partial}{\partial x} ,
\end{equation}
be the infinitesimal generator of $G$. Then, we have:
\begin{equation}
\label{funda}
\partial_t L .\zeta +\partial_x .\xi +L.\dot{\zeta} +
\partial_v L. 
\left ( 
-D_{a+}^{\alpha} x . \dot{\zeta} + D_{a+}^{\alpha} (\xi ) 
\right )
=0.
\end{equation}
\end{lemma}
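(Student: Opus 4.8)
The identity (\ref{funda}) is meant to be read as the infinitesimal form of the invariance of the extended functional, restricted to the admissible set $U$, so the plan is to apply the infinitesimal invariance criterion (\ref{inficriterion}) to the extended Lagrangian $\tilde{L}$ and then evaluate on $U$. First I would invoke the extended variational symmetry Lemma \ref{extendsym}: because $\mathcal{L}$ is invariant under the admissible group $\{\phi_s\}_{s\in\R}$ and this group satisfies the chain rule property, the extended functional $\tilde{\mathcal{L}}$ is invariant under the action $(t,x)\mapsto(\varphi_s^0(t),\varphi_s^1(x))$ --- which leaves the new independent variable $\tau$ fixed --- along the paths $\tau\mapsto(t(\tau),x(\tau))$ of $U$, i.e. those with $t(\tau)=\tau$ and $x(\tau)$ a solution of (\ref{fel}). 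This invariance is exactly what makes the criterion (\ref{inficriterion}) available in the present setting.

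Next I would specialize (\ref{inficriterion}) to the generator $X=\zeta(t)\partial_t+\xi(x)\partial_x$, so that $\frac{d\phi_s^0}{ds}|_{s=0}=\zeta$, $\frac{d\phi_s^1}{ds}|_{s=0}=\xi$, $\frac{d}{dt}(\frac{d\phi_s^0}{ds}|_{s=0})=\dot\zeta$ and $D_{a+}^{\alpha}(\frac{d\phi_s^1}{ds}|_{s=0})=D_{a+}^{\alpha}(\xi)$, and substitute the explicit partial derivatives of $\tilde L$ recorded in (\ref{extendedformula}). Evaluating on $U$ one has $w=\dot t=1$ and $v=\di_c D^{\alpha}_{a+}x$, whence $\partial_t\tilde L$, $\partial_x\tilde L$ and $\partial_v\tilde L$ collapse to the plain partials $\partial_t L$, $\partial_x L$, $\partial_v L$ evaluated at $(\star_{\tau})$, while the classical-velocity partial becomes $\partial_w\tilde L=L-\alpha\,\di_c D^{\alpha}_{a+}x\cdot\partial_v L$. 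Assembling the four contributions of (\ref{inficriterion}) and grouping the terms carrying $\partial_v L$ then yields the asserted relation (\ref{funda}).

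The genuine difficulty is isolated in the $\partial_w\tilde L$ term: this is the only place where the fractional power $w^{\alpha}$ in the definition (\ref{defL}) of $\tilde L$ enters nontrivially, and differentiating $L(t,x,v/w^{\alpha})\cdot w$ in $w$ produces the extra summand $-\alpha v w^{-\alpha}\partial_v L$. On $U$ this is $-\alpha\,\di_c D^{\alpha}_{a+}x\cdot\partial_v L$, and it is precisely this summand, multiplied by $\dot\zeta$, that supplies the $\partial_v L\cdot(-D_{a+}^{\alpha}x\,\dot\zeta)$ piece of (\ref{funda}); matching the two forms requires careful book-keeping of the factor $\alpha$ and of the Caputo / Riemann--Liouville conversion (\ref{relation}) for the base point $a$, and this is the step where I would be most cautious. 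A second, conceptual point --- stressed throughout the preceding sections --- is that (\ref{inficriterion}) is only legitimate after restriction to $U$, a restriction that, unlike the classical case, cannot be lifted; consequently (\ref{funda}) must be understood as valid along solutions of (\ref{fel}) rather than as a pointwise algebraic identity on jets.
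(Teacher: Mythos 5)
Your proposal follows essentially the same route as the paper, which derives (\ref{funda}) precisely by combining Lemma \ref{extendsym} with the infinitesimal invariance criterion (\ref{inficriterion}) and then substituting the partial derivatives (\ref{extendedformula}) of $\tilde L$ evaluated on $U$ (where $w=1$); the paper compresses all of this into the single line ``the proof follows from simple computations using formula (\ref{extendedformula})''. Your caution about the factor $\alpha$ is well placed: the term $\partial_w\tilde L\cdot\dot\zeta$ genuinely produces $-\alpha\,\di_c D^{\alpha}_{a+}x\cdot\partial_v L\cdot\dot\zeta$, whereas the stated (\ref{funda}) carries no $\alpha$ on that term, an inconsistency the paper leaves unaddressed (compare the $\alpha$ present in (\ref{FLL}(b)) but absent from (\ref{lemext}) and (\ref{conslaw})).
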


The proof follows from simple computations using formula (\ref{extendedformula}).\\

The proof of the fractional Noether theorem now follows easily. Using the fact that 
\begin{equation}
\di\frac{d}{dt} \left ( L(\star ) \right ) =
\partial_t L (\star )+\partial_x L (\star ).\dot{x} +\partial_v L (\star ). D_{a+}^{\alpha}[ \dot{x}] ,
\end{equation}
we rewrite equation (\ref{funda}) as
\begin{equation}
\di\frac{d}{dt} \left ( L(\star ) \right ) \zeta -\partial_x L (\star ).\dot{x} \zeta -\partial_v L (\star ). D_{a+}^{\alpha}[ \dot{x}] \zeta
+\partial_x .\xi +L.\dot{\zeta} +
\partial_v L. 
\left ( 
-D_{a+}^{\alpha} x . \dot{\zeta} + D_{a+}^{\alpha} (\xi ) 
\right )
=0.
\end{equation}
Using the equality $\di\frac{d}{dt} \left ( L(\star ) \zeta \right ) = 
\di\frac{d}{dt} \left ( L(\star ) \right ) \zeta -L \dot{\zeta}$ and the fact that $x$ is a solution of the fractional Euler-Lagrange equation, we 
deduce that
\begin{equation}
\di\frac{d}{dt} \left ( L(\star ) \zeta \right ) + D_{b-}^{\alpha} \left [ \partial_v L (\star ) \right ] .\left ( \dot{x} \zeta -\xi\right  ) -\partial_v L (\star ). \left ( \zeta \cdot D_{a+}^{\alpha}[ \dot{x}] +
+\dot{\zeta} \cdot D_{a+}^{\alpha} [x] - D_{a+}^{\alpha} (\xi ) 
\right )
=0.
\end{equation}
A conservation law is then obtain integrating the previous expression between $a$ and $t$. We then obtain the function
\begin{equation}
I(x)=L(\star ) \cdot\zeta +\di \int_a^t
\left [  
D_{b-}^{\alpha} \left [ \partial_v L (\star ) \right ] .\left ( \dot{x} \zeta -\xi\right  ) -\partial_v L (\star ). \left ( \zeta \cdot D_{a+}^{\alpha}[ \dot{x}] +
+\dot{\zeta} \cdot D_{a+}^{\alpha} [x] - D_{a+}^{\alpha} (\xi ) 
\right )
\right ] dt .
\end{equation}
This concludes the proof of the fractional Noether theorem.

\section{Examples and numerical simulations}

\subsection{The fractional harmonic oscillator} 
\label{sec:numerics}

Let us consider the {\it fractional oscillator} studied in  (\cite{ata},Example $18$ page $1513$) for which the Lagrangian is given by:
\begin{equation}
L=\frac{1}{2}\left(\di_0D^{\alpha}_{t}u\right)^2-\omega^2 \frac{1}{2}u^2,
\end{equation}
where $\omega$ is a frequency. Initial conditions $u(0)= 0$ and $u'(0)= 1$. The Euler-Lagrange equation for such an $L$ is:
\begin{equation} \label{eq:1}
\di_tD^{\alpha}_1\left(\di_0D^{\alpha}_t u\right)=\omega^2u. 
\end{equation}
The fractional conservation law for (\ref{eq:1}):
\begin{equation} 
\label{quan}
\frac{1}{2}\left(\di_0D^{\alpha}_{t}u\right)^2-\omega^2 \frac{1}{2}u^2+\int_{0}^{t}\left(-\di_0D^{\alpha}_s u'\cdot \di_0D^{\alpha}_s u+u'\cdot \di_sD^{\alpha}_1\left(\di_0D^{\alpha}_s u\right)\right)ds=\mathrm{const}.
\end{equation}

We can check this result using numerical simulations. 

\begin{figure}[!h]
\centerline{%
   \includegraphics[width=1\textwidth]{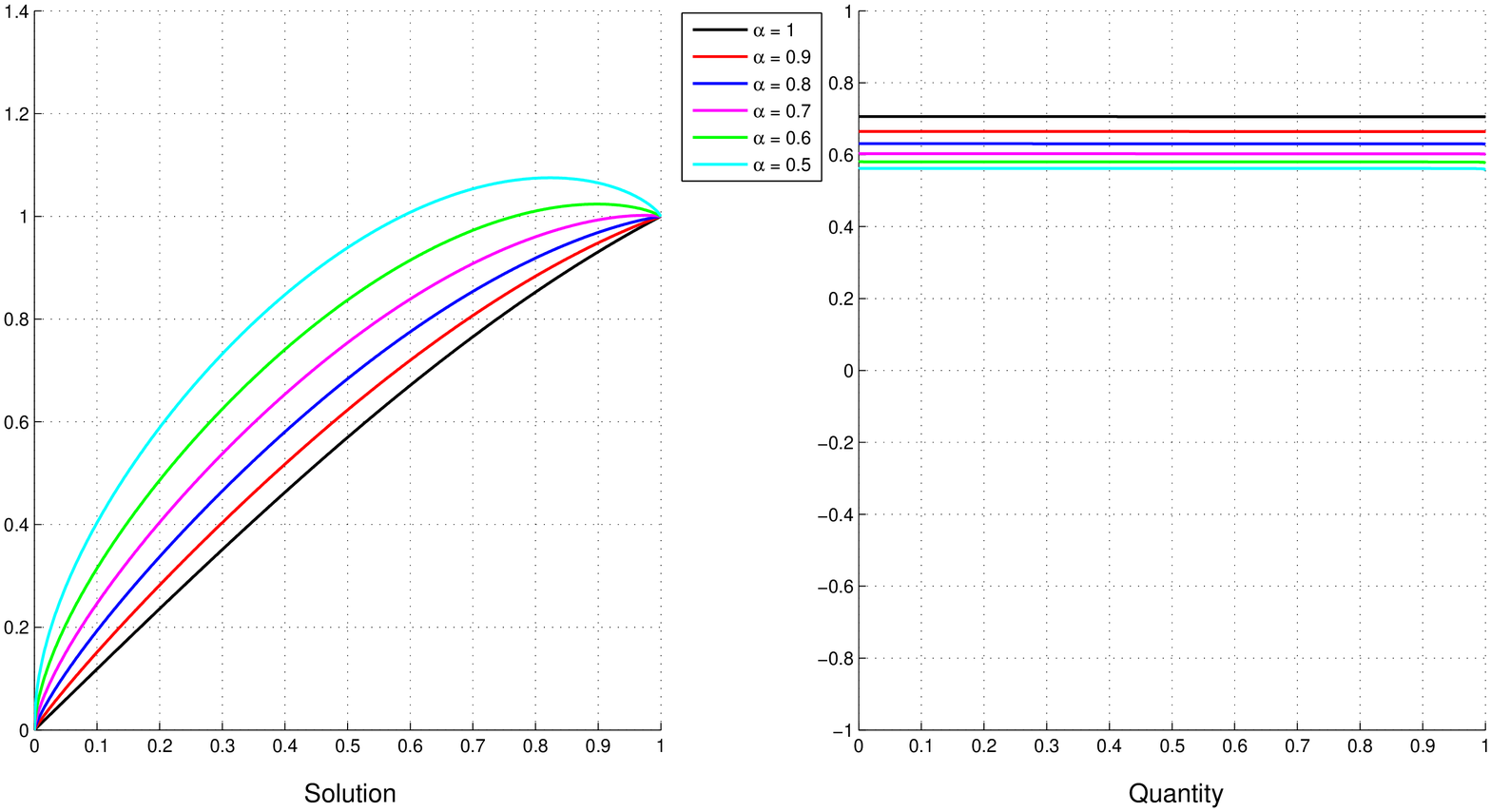}}
    \caption{Solution $u$ of the Euler-Lagrange equation (\ref{eq:1}) (left) and equivalent quantity (\ref{quan}) (right) with $\omega=1$. The range of quantity values is fixed: $[-1,1]$}
    \label{f1}
\end{figure}
\begin{figure}[!h]
\centerline{%
   \includegraphics[width=1\textwidth]{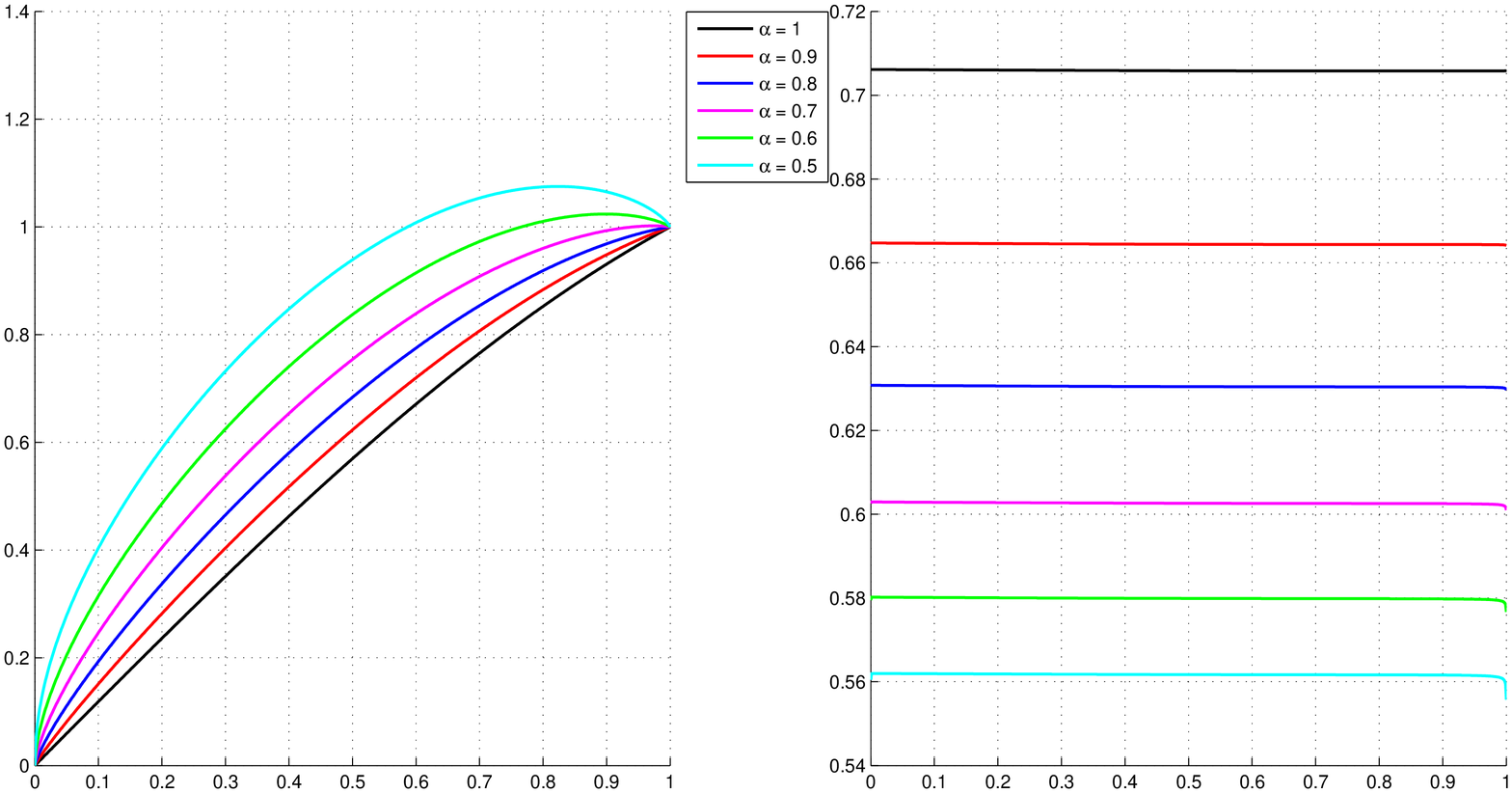}}
    \caption{Solution $u$ of the Euler-Lagrange equation (\ref{eq:1}) (left) and equivalent quantity (\ref{quan}) (right) with $\omega=1$. The range of quantity values is not fixed.}
    \label{f1}
\end{figure}
\begin{figure}[!h]
\centerline{%
   \includegraphics[width=1\textwidth]{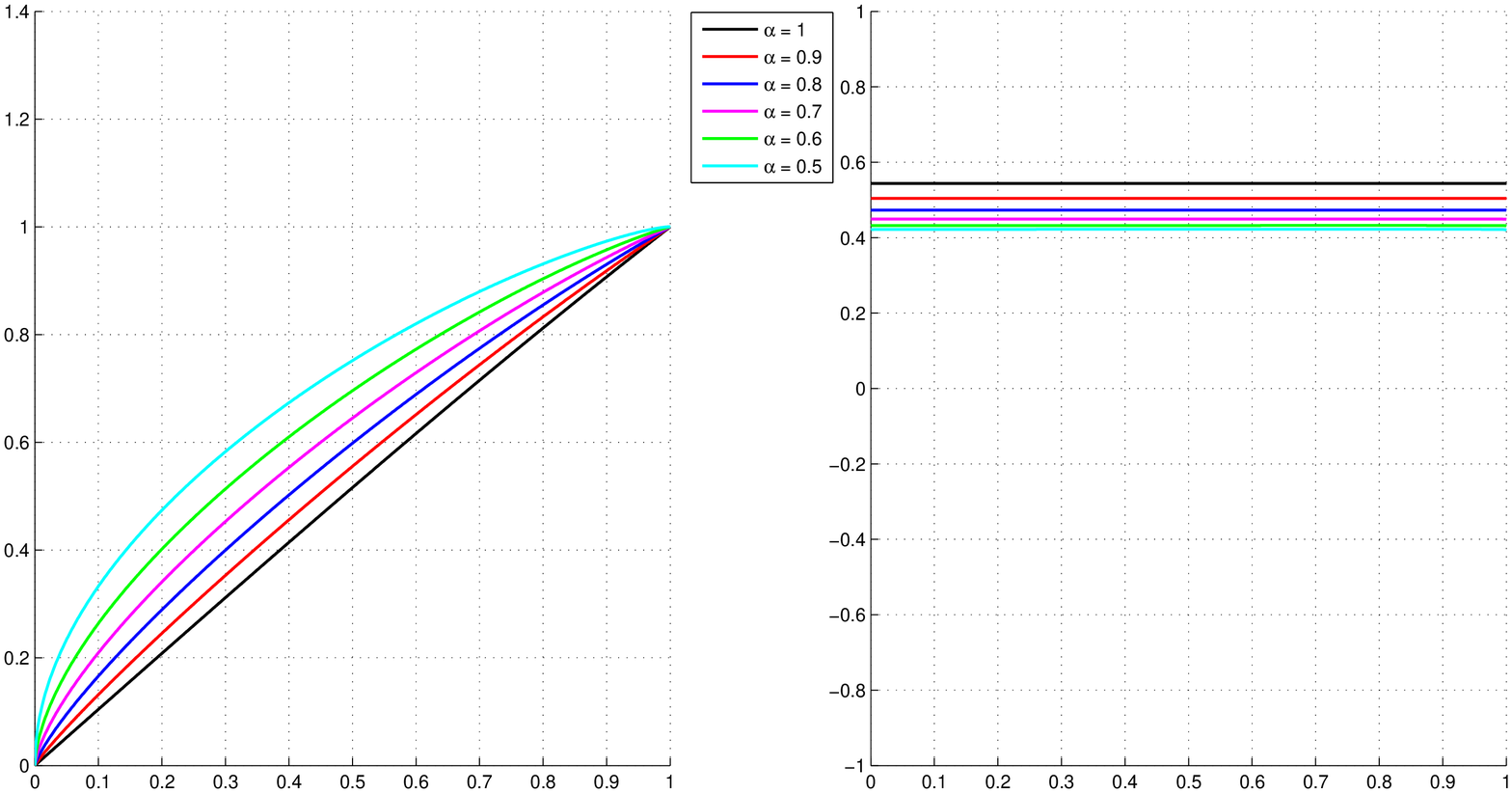}}
    \caption{Solution $u$ of the Euler-Lagrange equation (\ref{eq:1}) (left) and equivalent quantity (\ref{quan}) (right) with $\omega=0.5$. The range of quantity values is fixed: $[-1,1]$}
    \label{f1}
\end{figure}
\begin{figure}[!h]
\centerline{%
   \includegraphics[width=1\textwidth]{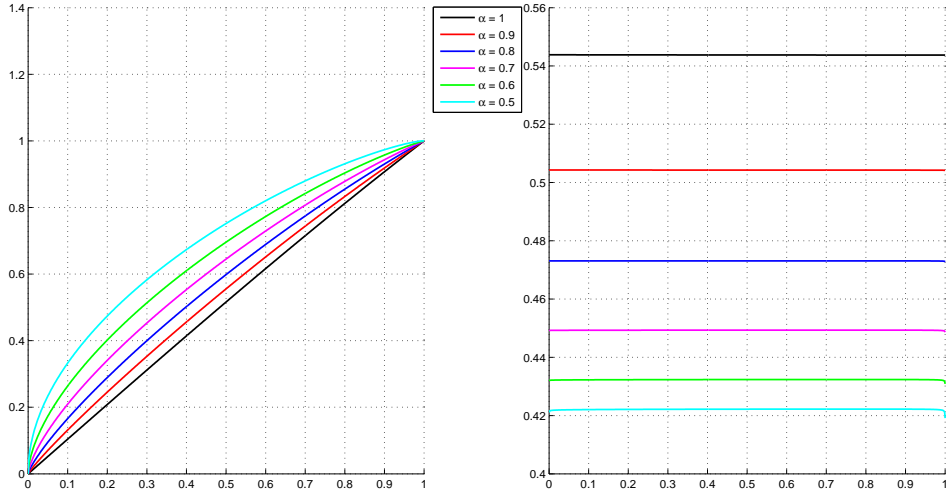}}
    \caption{Solution $u$ of the Euler-Lagrange equation (\ref{eq:1}) (left) and equivalent quantity (\ref{quan}) (right) with $\omega=0.5$. The range of quantity values is not fixed.}
    \label{f1}
\end{figure}

\subsection{Example 2}

We consider the one parameter $\alpha\in ]0,1]$ family of Lagrangian
\begin{equation}
\label{exemple2}
L_{\alpha} (x_1 ,x_2 ,v_1 ,v_2 )= v_1^{1/\alpha} x_2 -v_2^{1/\alpha} x_1  .
\end{equation}
As the Lagrangian is independent of the time variable, we can use Corollary \ref{timespec} to obtain a first conservation law. The quantity 
\begin{equation}
\left .
\begin{array}{lll}
I(q_1 ,q_2) & = &  
-\left [ q_2 \left ( D_{a+}^{\alpha} q_1 \right ) ^{1/\alpha} 
-q_1 \left ( D_{a+}^{\alpha} q_2 \right ) ^{1/\alpha} 
\right ] \\
& & +\di\int_a^t
\left ( 
- \dot{q_1} \left ( D_{a+}^{\alpha} q_2 \right )^{1/\alpha}   
+ \dot{q_2} \left ( D_{a+}^{\alpha} q_1 \right )^{1/\alpha}   
\right )
\, dt 
\\
& & 
+(1/\alpha)\di\int_a^t 
\left ( 
-q_2 \left ( D_{a+}^{\alpha} q_1 \right )^{(1-\alpha)/\alpha} D_{a+}^{\alpha} (\dot{q_1} ) 
+
q_1 \left ( D_{a+}^{\alpha} q_2 \right )^{(1-\alpha)/\alpha} D_{a+}^{\alpha} (\dot{q_2} ) 
\right )
\, dt
\end{array}
\right .
\end{equation}
is a conservation law.\\

The functional is also invariant under a more complicated symmetry groups.

\begin{lemma}
The fractional functional associated to $L_{\alpha}$ is invariant under the local group of transformations given by $\phi_s^0 (t)=t\, e^{-cs}$ and $\phi_s^1 (x)=x$ for $s\in \R$. 
\end{lemma}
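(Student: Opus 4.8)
The plan is to reduce the claim to the explicit invariance identity \eqref{inv} of Lemma \ref{lemma} and then to exploit a homogeneity property of $L_{\alpha}$ in the velocity variables. First I would observe that the group $\phi_s^0(t)=t\,e^{-cs}$, $\phi_s^1(x)=x$ is admissible in the sense of the definition of admissible groups: it is of the form \eqref{admigroup} with $\lambda=-c$ and $\beta(s)\equiv 0$. Hence the chain rule property \eqref{cpalpha} is available, so that the change of variable $t=(\phi_s^0)^{-1}(\tau)$ performed in the proof of Lemma \ref{lemma} is legitimate. Since that computation is an identity, the invariance of $\mathcal{L}$ in the sense of Definition \ref{def:1} is \emph{equivalent} to the equality of the two integrands in \eqref{inv} for every subinterval $[t_a,t_b]\subseteq[a,b]$ and every admissible path $x$. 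Thus it suffices to check \eqref{inv}.

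Next I would compute the quantities entering \eqref{inv} for this group. As $\varphi_s^0(t)=t\,e^{-cs}$ we have $\tfrac{d\varphi_s^0}{dt}=e^{-cs}$, a constant, so $\big(\tfrac{d\varphi_s^0}{dt}\big)^{\alpha}=e^{-cs\alpha}$; and since $\varphi_s^1(x)=x$, we have $\varphi_s^1\circ x=x$, so the velocity slot on the right-hand side of \eqref{inv} becomes $\di_cD^{\alpha}_{a+}x(t)\cdot e^{cs\alpha}$. Writing $v_i=\di_cD^{\alpha}_{a+}x_i(t)$, the right-hand integrand of \eqref{inv} is therefore
\begin{equation*}
L_{\alpha}\big(x_1,x_2,e^{cs\alpha}v_1,e^{cs\alpha}v_2\big)\,e^{-cs}=\Big[(e^{cs\alpha}v_1)^{1/\alpha}x_2-(e^{cs\alpha}v_2)^{1/\alpha}x_1\Big]e^{-cs}.
\end{equation*}

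The decisive structural fact is that $L_{\alpha}$ is positively homogeneous of degree $1/\alpha$ in $(v_1,v_2)$: each monomial $v_i^{1/\alpha}$ acquires the factor $(e^{cs\alpha})^{1/\alpha}=e^{cs}$. Consequently the bracket equals $e^{cs}L_{\alpha}(x_1,x_2,v_1,v_2)$, and the Jacobian factor $e^{-cs}$ cancels it exactly, reducing the right-hand integrand to $L_{\alpha}(x_1,x_2,v_1,v_2)$, which is precisely the left-hand integrand. Integrating over $[t_a,t_b]$ yields \eqref{inv}, and hence the invariance of the functional.

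I expect the only delicate points to be bookkeeping rather than substance. The first is ensuring that the chain rule \eqref{cpalpha}, and with it the reduction to \eqref{inv}, is genuinely applicable: this is exactly why the admissibility of $\phi_s^0$ must be verified, and one should keep track of the fact that the base point of the fractional derivative shifts from $a$ to $\phi_s^0(a)$ before the change of variable restores it. The second is the tacit requirement that $\di_cD^{\alpha}_{a+}x_i$ be nonnegative (or that one work with a fixed branch), so that the non-integer powers $v_i^{1/\alpha}$ are well defined; this is a domain condition on the admissible paths rather than an analytic obstruction, and the positive scaling factor $e^{cs\alpha}>0$ preserves whatever sign convention has been fixed.
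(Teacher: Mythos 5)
Your proof is correct and follows essentially the same route as the paper's: both apply the invariance criterion of Lemma \ref{lemma}, note that the velocity slot acquires the factor $e^{cs\alpha}$ while the Jacobian contributes $e^{-cs}$, and use the homogeneity of degree $1/\alpha$ of $L_{\alpha}$ in the velocity variables to cancel the two. Your additional remarks on verifying admissibility and on the branch choice for the non-integer powers are sensible bookkeeping but do not change the argument.
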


\begin{proof}
We apply the invariance criterion given by Lemma \ref{lemma}. We have 
\begin{equation}
\left .
\begin{array}{lll}
L\left ( x(t) ,, \di\frac{1}{(e^{-cs})^{\alpha}} D_{a+}^{\alpha} (x) 
) \right )  \cdot 
e^{-cs} &  = & L(x, D_{a+}^{\alpha} (x) ) \cdot \di\frac{e^{-cs}}{(e^{-cs})^{\alpha/\alpha}} \\
& = & L(x,D_{a+}^{\alpha} (x)) ,
\end{array}
\right .
\end{equation}
which concludes the proof.
\end{proof}

We then can use the Theorem \ref{main} to obtain the following conservation law :

\begin{equation}
\left .
\begin{array}{lll}
I(q_1 ,q_2) & = &  
-\left [ q_2 \left ( D_{a+}^{\alpha} q_1 \right ) ^{1/\alpha} 
-q_1 \left ( D_{a+}^{\alpha} q_2 \right ) ^{1/\alpha} 
\right ] ct\\
& & -\di\int_a^t
\left ( 
- \dot{q_1} \left ( D_{a+}^{\alpha} q_2 \right )^{1/\alpha}   
+ \dot{q_2} \left ( D_{a+}^{\alpha} q_1 \right )^{1/\alpha}   
\right ) ct\, dt
\\
& & 
-\di\frac{1}{\alpha} \di\int_a^t 
\left ( 
-q_2 \left ( D_{a+}^{\alpha} q_1 \right )^{(1-\alpha)/\alpha} D_{a+}^{\alpha} (q_1 ) 
+
q_1 \left ( D_{a+}^{\alpha} q_2 \right )^{(1-\alpha)/\alpha} D_{a+}^{\alpha} (\dot{q_2} ) 
\right )
ct\, dt \\
& & 
-\di\frac{c}{\alpha} \di\int_a^t 
\left ( 
-q_2 \left ( D_{a+}^{\alpha} q_1 \right )^{(1-\alpha)/\alpha} D_{a+}^{\alpha} (q_1 ) 
+
q_1 \left ( D_{a+}^{\alpha} q_2 \right )^{(1-\alpha)/\alpha} D_{a+}^{\alpha} (q_2 ) 
\right )
\, dt \\
&
\end{array}
\right .
\end{equation}

\begin{appendix}
\section{Note on numerical solving of the Euler-Lagrange equation} \label{appendix:1}
In order to obtain approximate solution for the Euler-Lagrange equation (\ref{ELH}) we convert this equation into the integral form. First let us formulate useful composition rules between fractional operators according to Definitions \ref{defintl} and \ref{fracderiv} (see \cite{bo}):
\begin{lemma} \label{cr}
Let $\alpha\in (0,1)$ and $x\in AC([a,b],\R^n)$, then the following relations
\begin{itemize}
\item $I_{a+}^{\alpha}\circ \di_c D^{\alpha}_{a+}x=x-x(a)$,
\item $I_{a+}^{\alpha}\circ D^{\alpha}_{a+}x=x$
\end{itemize}
are satisfied almost everywhere.
\end{lemma}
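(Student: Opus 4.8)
The plan is to reduce both identities to the classical fundamental theorem of calculus by exploiting the semigroup (additivity) property of the Riemann--Liouville integrals,
\begin{equation}
I_{a+}^{\alpha}\circ I_{a+}^{\beta}=I_{a+}^{\alpha+\beta},\qquad \alpha,\beta>0,
\end{equation}
which holds almost everywhere for every $L^1$ function by Fubini's theorem (the Dirichlet interchange formula). Since $x\in AC([a,b],\R^n)$, its classical derivative $x'$ exists a.e.\ and belongs to $L^1$, so all the fractional integrals written below are well defined almost everywhere, and this is exactly why the conclusions are stated a.e.

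For the first identity I would write the Caputo derivative, using Definition \ref{fracderiv}, as $\di_c D^{\alpha}_{a+}x=I_{a+}^{1-\alpha}\circ\frac{d}{dt}\,x=I_{a+}^{1-\alpha}x'$. Composing on the left with $I_{a+}^{\alpha}$ and invoking the semigroup property with exponent $\alpha+(1-\alpha)=1$ gives
\begin{equation}
I_{a+}^{\alpha}\circ \di_c D^{\alpha}_{a+}x=I_{a+}^{1}x'=\int_a^t x'(s)\,ds=x(t)-x(a),
\end{equation}
where the last equality is the fundamental theorem of calculus for absolutely continuous functions. This establishes the first relation.

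For the second identity the clean route is to use the relation (\ref{relation}) already recorded in the paper, namely $D_{a+}^{\alpha}x=\di_c D^{\alpha}_{a+}x+\frac{(t-a)^{-\alpha}}{\Gamma(1-\alpha)}x(a)$. Applying $I_{a+}^{\alpha}$ and using the first identity for the Caputo part reduces everything to evaluating $I_{a+}^{\alpha}$ on the power $(t-a)^{-\alpha}$. A direct Beta-integral computation,
\begin{equation}
I_{a+}^{\alpha}\big[(t-a)^{-\alpha}\big]=\frac{1}{\Gamma(\alpha)}\int_a^t (s-a)^{-\alpha}(t-s)^{\alpha-1}\,ds=\frac{B(1-\alpha,\alpha)}{\Gamma(\alpha)}=\Gamma(1-\alpha),
\end{equation}
shows that the extra term contributes exactly $x(a)$, so that
\begin{equation}
I_{a+}^{\alpha}\circ D_{a+}^{\alpha}x=\big(x-x(a)\big)+x(a)=x,
\end{equation}
which is the second relation.

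The routine parts are the two explicit integral evaluations; the only genuine point requiring care is justifying these manipulations for a merely absolutely continuous $x$ rather than a smooth one. In particular, the semigroup identity and the interchange of $I_{a+}^{\alpha}$ with the inner operations must be read as almost-everywhere equalities for $L^1$ data, which is precisely the content one quotes from \cite{bo}. The main obstacle, such as it is, lies in making the semigroup property rigorous via Fubini and in confirming convergence of the Beta integral at the endpoint $s=a$; both hold because $0<\alpha<1$ keeps the exponents $-\alpha$ and $\alpha-1$ integrable.
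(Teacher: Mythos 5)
Your argument is correct. Note that the paper itself gives no proof of this lemma at all: it is stated in the appendix as a known composition rule imported from the reference \cite{bo} (Bourdin's thesis). Your derivation is the standard one and checks out in every detail --- the first identity via $\di_c D^{\alpha}_{a+}x=I_{a+}^{1-\alpha}x'$, the semigroup property $I_{a+}^{\alpha}\circ I_{a+}^{1-\alpha}=I_{a+}^{1}$ and the fundamental theorem of calculus for $AC$ functions, and the second identity via relation (\ref{relation}) together with the Beta-integral evaluation $I_{a+}^{\alpha}\left[(t-a)^{-\alpha}\right]=\Gamma(1-\alpha)$, whose constancy in $t$ is exactly what makes the boundary term reduce to $x(a)$.
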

In the case of the right operators the counterparts of this rules are also valid.
According to the definitions of fractional integrals (\ref{rll}) and (\ref{rlr}) we can conclude that for every constant $C\in \R$ we have
\begin{equation} \label{const}
I_{a+}^{\alpha}C=\frac{(t-a)^{\alpha}}{\Gamma(1+\alpha)}C,\qquad I_{b-}^{\alpha}C=\frac{(b-t)^{\alpha}}{\Gamma(1+\alpha)}C.
\end{equation}

Now, the integral form of the Euler-Lagrange equation (\ref{FEL}) :
\begin{equation}
\label{integral}
\begin{split}
x(t)+&I_{a+}^{\alpha}\circ I_{b-}^{\alpha} x(t)-\left(\frac{t-a}{b}\right)^{\alpha}\left[I_{a+}^{\alpha}\circ I_{b-}^{\alpha} x(t)\right]_{t=b}\\
=&\left(1-\left(\frac{t-a}{b}\right)^{\alpha}\right)x(a)+\left(\frac{t-a}{b}\right)^{\alpha}x(b)
\end{split}
\end{equation}
can be easily derived based on (\ref{const}), the relations between derivatives (\ref{relation}) and the composition rules defined in Lemma \ref{cr}. Note that, if we put $\alpha=1$ in (\ref{integral}), we obtain the integral form of the equation $\ddot{x}=x$.

For the purpose of discretization of the integral equation (\ref{integral}) we define the equidistant partition on $[a,b]$ : $h=(b-a)/N$, $t_k=a+kh$, for $k=0,\ldots,N$, $N\in \N$. On the subinterval $[t_i,t_{i+1}]$ we substitute the function $f$ by the arithmetic average of values $f(t_{i})$ and $f(t_{i+1})$. We derive the approximations of the integrals:
\begin{equation}
\begin{split}
I_{a+}^{\alpha}f(t)|_{t=t_k}=& \frac{1}{\Gamma(\alpha)}\sum_{i=0}^{k-1} \int_{t_i}^{t_{i+1}}\frac{f(s)}{(t_k-s)^{1-\alpha}}ds\\
\approx& \frac{1}{\Gamma(\alpha)}\sum_{i=0}^{k-1}\int_{t_i}^{t_{i+1}}\frac{1}{(t_k-s)^{1-\alpha}}\left( \frac{f(t_i)+f(t_{i+1})}{2} \right)ds=:\di^h I_{a+}^{\alpha}f(t_k)
\end{split}
\end{equation}
for $k=1,\ldots,N$, and
\begin{equation}
\begin{split}
I_{b-}^{\alpha}f(t)|_{t=t_k}=& \frac{1}{\Gamma(\alpha)}\sum_{i=k}^{N-1} \int_{t_i}^{t_{i+1}}\frac{f(s)}{(s-t_k)^{1-\alpha}}ds\\
\approx& \frac{1}{\Gamma(\alpha)}\sum_{i=k}^{N-1}\int_{t_i}^{t_{i+1}}\frac{1}{(s-t_k)^{1-\alpha}}\left( \frac{f(t_i)+f(t_{i+1})}{2} \right)ds=:\di^h I_{b-}^{\alpha}f(t_k)
\end{split}
\end{equation}
for $k=0,\ldots,N-1$, where the sub-integrals can be directly calculated. Then we obtain the following algebraic system of equations
\begin{equation}
\label{disc}
\begin{split}
X_0=&x(a),\\
X_k+&\di^h I_{a+}^{\alpha}\circ \di^h I_{b-}^{\alpha} X_k-\left(\frac{t_k-a}{b}\right)^{\alpha}\di^h I_{a+}^{\alpha}\circ \di^h I_{b-}^{\alpha} X_N\\
=&\left(1-\left(\frac{t_k-a}{b}\right)^{\alpha}\right)X_0+\left(\frac{t_k-a}{b}\right)^{\alpha}X_N,\\
X_N=&x(b)
\end{split}
\end{equation}
which gives an approximate solution of the Euler-Lagrange equation (\ref{ELH}).

\end{appendix}

\vskip 5mm
Jacky Cresson (*) and Anna Szafra\'{n}ska (**)

\begin{small}
(*)  Laboratoire de Math{\'e}matiques Appliquées de Pau, UMR CNRS 5142,

Université de Pau et des Pays de l'Adour,

avenue de l'Université, BP 1155, 64013 Pau Cedex, France.
\vskip 1mm
(**) Department of Differential Equations and Mathematics Applications,

Gda\'nsk University of Technology,

G. Narutowicz Street 11/12, 80-233 Gda\'nsk, Poland

E-mail: aszafranska@mif.pg.gda.pl
\end{small}

\end{document}